\newtheorem{theorem}{Theorem}
\newtheorem{lemma}[theorem]{Lemma}
\newtheorem{proposition}[theorem]{Proposition}
\newtheorem{corollary}[theorem]{Corrolary}
\theoremstyle{definition}
\newtheorem{definition}{Definition}
\newtheorem{remark}{Remark}
\newtheorem{notation}{Notation}
\newtheorem{example}{Example}
\DeclareMathOperator{\ent}{ent}
\newcommand{\eps}{\varepsilon}
\newcommand{\R}{\mathbb{R}}
\newcommand{\Canovas}{C{\'a}novas}
\newcommand{\Rodriguez}{Rodr{\'{\i}}guez}
\title{Topological entropy of compact subsystems of transitive real line maps}
\author{Dominik Kwietniak}
\address{Faculty of Mathematics and Computer Science, Jagiellonian University in Krak\'ow, ul. \L o\-jasiewicza 6, 30-348 Krak\'ow, Poland}\email{dominik.kwietniak@uj.edu.pl}
\author{Martha Ubik}
\address{Faculty of Mathematics and Computer Science, Jagiellonian University in Krak\'ow, ul. \L o\-jasiewicza 6, 30-348 Krak\'ow, Poland}\email{martha.ubik@gmail.com}
\date{\today}
\begin{document}

\begin{abstract}
For a continuous map $f$ from the real line (half-open interval $[0,1)$) into itself let $\ent(f)$ denote the supremum of topological entropies of $f|_K$, where $K$ runs over all compact $f$-invariant subsets of $\mathbb{R}$ ($[0,1)$, respectively). It is proved that if $f$ is topologically transitive, then the best lower bound of $\ent(f)$ is $\log\sqrt{3}$ ($\log 3$, respectively) and it is not attained.
This solves a problem posed by \Canovas\ [Dyn. Syst. \textbf{24} (2009), no. 4, 473--483].
\end{abstract}

\maketitle






\section{Introduction}
A question of considerable interest (see \cite{ABLM,AKLS,ALM,ALMM,Baldwin,BS,Blokh1,Canovas,Dirbak,HK,HKO2,KM,Spitalsky,SuYe,Ye}) is: how various properties of a dynamical system affect its topological entropy? To make this question precise one first fixes a class of dynamical systems, and then searches for bounds for topological entropy of maps in that class. In most studies the attention is restricted to topologically transitive systems, as transitivity is regarded as the simplest sufficient condition for non-trivial global dynamics. Also, in most cases one concentrates on compact metric spaces, as the notion of topological entropy is best suited to this setting.

It turns out that in general, even for compact metric spaces, there is no connection between topological transitivity and topological entropy. A system with positive topological entropy need not be transitive, and a transitive system may have zero topological entropy. However, there are spaces such that every topologically transitive map on them have necessarily positive topological entropy. For instance, by \cite{Blokh1} on a compact interval $[0,1]$ every transitive map has topological entropy at least $\log\sqrt{2}$, and there is a transitive map with topological entropy matching this bound. For references to these and other results of this type, e.g. lists of known best lower bounds for the topological entropy of transitive maps on various spaces see \cite[page 341]{ALM} or \cite{AKLS,BS,HK,HKO2,SuYe,Ye}.

Recently, \Canovas\ and \Rodriguez\ in \cite{CR} introduced entropy-type invariant for a dynamical system defined on not necessarily compact space. For a continuous map $f\colon X\mapsto X$, where $X$ denotes a topological space, the invariant $\ent(f)$ from \cite{CR} is defined as the supremum of topological entropies of $f|_K$, where $K$ runs over all compact invariant (meaning $f(K)\subset K$) subsets of the real line, with the agreement that $\sup\emptyset=\infty$.
This definition coincides with the standard one when applied to the compact dynamical system. Then the question about the connection between the invariant $\ent$ and other properties of dynamical systems arises naturally. One of possible problems of this kind has been studied by \Canovas\ in \cite{Canovas}, where a lower bound is obtained for the (non-compact) topological entropy $\ent$ of transitive maps of the real line. Unfortunately, the proof presented in \cite{Canovas} is flawed and contains at least two errors (some relevant counterexamples are indicated below, see Section \ref{sec:remarks}). Our goal is to present a correct proof of the existence of the lower bound for \Canovas-\Rodriguez\ entropy of transitive real line map, and solve the problem completely, showing that the bound from \cite{Canovas} is the best possible, that is, $\inf\{\ent(f):f\in \mathcal{T}(\mathbb{R})\}=\log\sqrt{3}$, where $\mathcal{T}(\mathbb{R})$ denotes the family of all transitive maps of the real line.
We also prove that the bound is not attained, that is, there is no transitive map $f$ of the real line with $\ent(f)=\log\sqrt{3}$.
These results are covered in Theorem \ref{thm:inf-is-log-sqrt-3}. This confirms the conjecture  of \Canovas\ from \cite{Canovas}.

We consider two similar problems: we prove in Theorem \ref{thm:inf-not-bi-is-log-sqrt-3} that $\inf\{\ent(f):f\in \mathcal{T}'(\mathbb{R})\}=\log\sqrt{3}$, where $\mathcal{T}'(\mathbb{R})$ denotes the family of all transitive, but not bitransitive maps of the real line and we solve an analogous problem for the maps from the half-open interval $[0,1)$ to itself showing in Theorem \ref{thm:inf-half-open} that $\inf\{\ent(f):f\in \mathcal{T}([0,1))\}=\log 3$.

The organization of the paper is the following. In Section \ref{sec:terminology} we set up notation and terminology.
The purpose of Section \ref{sec:aux} is to remind of some important properties of transitive maps from a real interval into itself and to quote some auxiliary results connecting the topological entropy with the notion of a horseshoe. In Section \ref{sec:lower-bounds} we prove the existence of lower bounds for $\ent(f)$ for $f$ in various classes of transitive maps, while in Section \ref{sec:examples} we construct examples proving that these bounds are the best possible. Section \ref{sec:remarks} contains our remarks and corrections to \cite{Canovas}.
In Section \ref{sec:specification} we share some observations on the specification property in the non-compact setting. We prove that for non-compact spaces the specification property is no longer a conjugacy invariant, and there are mixing maps of the open and half-open interval without the specification property.

\section{Terminology and notation}\label{sec:terminology}

A \emph{dynamical system} is a pair $(X,f)$ where $X$ is a metric space and $f$ is a map of $X$. Here, \emph{a map of $X$} means always \emph{a continuous map from $X$ to itself}. As usual, when the domain is clear, we will write about properties of maps, having in mind properties of underlying dynamical systems. In this convention, we say that a map $f$ is \emph{transitive} if for every nonempty open subsets $U$ and $V$ of $X$, the intersection $f^n(U)\cap V$ is nonempty for some positive integer $n$; a map $f$ is \emph{totally transitive} if for every natural $k>0$ its $k$-th iterate $f^k=f\circ\ldots\circ f$ ($k$-times composition of $f$ with itself) is transitive; in particular, a map $f$ is \emph{bitransitive} if $f^2$ is transitive; a map $f$ is \emph{weakly mixing} if $f\times f$ is transitive; finally, a map $f$ is \emph{mixing} if for every nonempty open subsets $U$ and $V$ of $X$, there is a positive integer $N$ such that the intersection $f^n(U)\cap V$ is nonempty for all $n\ge N$. If the underlying space is a dense in itself Baire space, then $(X,f)$ is transitive if and only if there is a point $x\in X$ whose \emph{orbit} $\{f^n(x):n\ge 0\}$ is dense in $X$ (see \cite{KS}).

To be more precise, we should write about \emph{topologically transitive (mixing etc.)} systems, to distinguish these notions from their ergodic counterparts, but we hope that no misunderstanding will arise, if we skip here the adverb \emph{topologically} to shorten the exposition. A set $K\subset X$ is invariant for $f$ if $f(K)\subset K$. Restricting a map $f$ to a non-empty invariant set $K$ we obtain a \emph{subsystem} of a system $(X,f)$. A dynamical system $(X,f)$ is a \emph{factor} of a system $(Y,g)$ if there is a continuous surjection $\varphi\colon Y\mapsto X$ such that $\varphi\circ g=f\circ\varphi$. In this case, we call the system $(Y,g)$ an \emph{extension} of $(X,f)$. If $\varphi$ as above is also a homeomorphisms, then we say that systems $(X,f)$ and $(Y,g)$ are \emph{conjugated}. 
For a dynamical system $(X,f)$ defined on a compact metric space
one may define the \emph{topological entropy} of the system, denoted $h(f)$. This nonnegative number from the extended interval $[0,+\infty]=[0,+\infty)\cup\{+\infty\}$ is an important conjugacy invariant. As we not need to appeal to the definition of topological entropy, we refer the reader to the literature (see \cite[Chapter 4]{ALM}). Let us only recall that for every compact dynamical system $(X,f)$ we have: $h(f^k)=kh(f)$ for each $k>0$, the entropy of any closed subsystem is not greater than $h(f)$; the  entropy of any factor do not exceed the entropy of the extension; finally, if $f$ is Lipschitz with constant $L\ge 0$, then $h(f)\le \max\{0,\log L\}$. If $X$ is not necessarily compact space, we follow \Canovas\ and \Rodriguez\ \cite{CR}, and define
\[
\ent (f)=\sup\{h(f|_K):K\in\mathcal{K}(f)\},
\]
where $\mathcal{K}(f)$ denotes the family of all non-empty compact $f$-invariant subsets of $X$.
Main properties of this entropy are stated in \cite[Theorem 2.1]{CR}. We note here only point~(c) of that Theorem:
$\ent(f^n)=n\cdot \ent(f)$ for all $n\ge 1$.
We may adopt a convention that $\ent(f)=\infty$ if $f$ has not any non-empty compact invariant subsets,
but we will not need it in the present paper anyway.

\section{Auxiliary results}\label{sec:aux}

\subsection{Quasihorseshoes and entropy}


For the proof of existence and non-attainability of the lower bound we need tools developed in \cite{HK}.

By a \emph{real interval} (an \emph{interval} for short) we mean a connected subset of the real
line with non-empty interior.
Any real interval $L$ is thus homeomorphic to one of the following subsets of the real line with
the usual topology: a \emph{compact interval} $[0,1]$, a \emph{half-open interval} $[0,1)$, or an
\emph{open interval} $(0,1)$. It follows that any dynamical system
on a real interval is conjugate to a system on one of the following spaces:
$[0,1]$, $[0,1)$, or $(0,1)$.

Let $f$ be a map from a real interval $L$ to $\mathbb{R}$. An
$s$-\emph{quasihorseshoe} for $f$ is a compact interval $J\subset
L$, and a collection $\mathcal{C}=\{A_1,\ldots,A_s\}$ of $s\ge 2$
nonempty compact subsets of $J$ fulfilling the following three
conditions: (a)~each set $A\in \mathcal{C}$ is an union of finite
number of compact intervals, (b)~the interiors of the sets from
$\mathcal{C}$ are pairwise disjoint, (c)~$J\subset f(A)$ for every
$A\in\mathcal{C}$. A quasihorseshoe $(J,\mathcal{C})$ is
\emph{tight} if $J$ is the union of elements of $\mathcal{C}$ and
$f(A)=J$ for every $A\in\mathcal{C}$. A quasihorseshoe
$(J,\mathcal{C})$ is \emph{loose} if the union of elements of
$\mathcal{C}$ is a proper subset of $J$. An $s$-quasihorseshoe (a
tight $s$-quasihorshoe) $(J,\mathcal{C})$ for $f$ is called an
$s$-\emph{horseshoe} (a tight $s$-\emph{horseshoe}) if every
$A\in\mathcal{C}$ is a compact interval. Our definition of a
horseshoe is equivalent to the definition from \cite[page 204]{ALM}.
If there is an $s$-quasihorseshoe ($s$-horseshoe, etc.)
$(J,\mathcal{C})$ for $f$, we simply say that $f$ \emph{has} an
$s$-quasihorseshoe ($s$-horseshoe, etc.), and $J$ \emph{carry} a
quasihorseshoe ($s$-horseshoe, etc.) for $f$.

It is straightforward to see that the proofs of \cite[Lemma
4.3.1]{ALM} and \cite[Proposition 4.3.2]{ALM} are, with the
necessary changes, valid for our quasihorseshoes instead of
horseshoes (see also \cite[Remark 4.3.4]{ALM}). For completeness we
reformulate Lemma 4.3.1 and Proposition 4.3.2 of \cite{ALM} and Proposition 4.8 of \cite{HK}
as Proposition~\ref{prop:horseshoe-gives-entropy} below.

\begin{proposition} \label{prop:horseshoe-gives-entropy}
If a transitive map $f$ of a real interval $L$ has a loose
$s$-quasihorseshoe then there exists $N>0$ such that for every $n\ge
N$ the map $f^n$ has an $(s^n+1)$-quasihorseshoe.
Additionally, there exists a compact invariant subset $K$ such that
$\ent(f)\ge h(f|_K)>\log s$.
\end{proposition}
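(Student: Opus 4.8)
The plan is to prove the proposition in two stages, exploiting transitivity to convert a loose quasihorseshoe into genuine expansion and then extracting entropy from the resulting combinatorial structure.

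First I would analyze the loose $s$-quasihorseshoe $(J,\mathcal{C})$ with $\mathcal{C}=\{A_1,\ldots,A_s\}$. By definition each $A_i$ maps over $J$ (that is, $J\subset f(A_i)$) while the union $\bigcup_i A_i$ is a \emph{proper} subset of $J$. The looseness gives us a gap: there is a point (indeed an open subinterval) of $J$ not covered by the sets in $\mathcal{C}$. I would use transitivity to find, for sufficiently large $n$, an orbit segment under $f^n$ that threads through this gap and returns into $J$, producing one extra branch beyond the $s^n$ branches one gets by naively iterating. The heuristic is that each $A_i$ already self-maps across $J$, so composing the covering relation $n$ times yields at least $s^n$ subsets of $J$ each mapping over $J$ under $f^n$; transitivity, applied to the uncovered gap in $J$ and the interval $J$ itself, produces an $(n)$-th iterate image that connects the gap back across $J$, furnishing the $(s^n+1)$-st branch. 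The content of ``there exists $N>0$'' is that one must wait long enough for the transitivity-supplied connection to span all of $J$, and this is exactly where the quantitative work lies. This step is, in spirit, the adaptation of \cite[Lemma 4.3.1]{ALM} and \cite[Proposition 4.8]{HK} that the authors have already flagged as carrying over to quasihorseshoes.

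Next, with an $(s^n+1)$-quasihorseshoe for $f^n$ in hand for each $n\ge N$, I would invoke the quasihorseshoe analogue of \cite[Proposition 4.3.2]{ALM}, which says that a $t$-quasihorseshoe forces a compact invariant set on which the entropy is at least $\log t$. Applying this to $f^n$ with $t=s^n+1$ yields a compact $f^n$-invariant set $K_n$ with $h(f^n|_{K_n})\ge\log(s^n+1)$. Taking $K=\bigcup_{j=0}^{n-1}f^j(K_n)$ produces an $f$-invariant compact set, and using $\ent(f^n)=n\,\ent(f)$ together with $h(f^n|_{K_n})=n\,h(f|_{K})$ (or the corresponding inequality for the entropy of the restriction), I would conclude
\[
\ent(f)\ge h(f|_K)\ge\frac{1}{n}\log(s^n+1)>\frac{1}{n}\log(s^n)=\log s.
\]
The strict inequality is automatic because $s^n+1>s^n$, and crucially it holds for \emph{each fixed} $n\ge N$, so the claimed strict bound $h(f|_K)>\log s$ is attained by a single compact invariant set rather than only in a limit.

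The main obstacle I anticipate is the first stage: rigorously producing the extra $(s^n+1)$-st branch. Merely iterating the covering relations gives $s^n$ subsets mapping over $J$ under $f^n$, which only yields $\ent(f)\ge\log s$ without the strict inequality. To get genuinely more than $s^n$ branches one must use \emph{both} the looseness (the existence of a gap not covered by $\mathcal{C}$) and transitivity (to route an orbit through that gap and back across $J$), and one must verify that the new branch has interior disjoint from the old ones and still covers $J$ under $f^n$ — all while managing the domain issue that $f$ maps $L$ into $\mathbb{R}$, so intermediate images could in principle escape $J$. Checking that the transitivity-supplied connecting interval can be chosen inside $J$ with the covering property intact, and that $N$ can be chosen uniformly, is the delicate part; everything after that is a routine application of the already-cited entropy estimates.
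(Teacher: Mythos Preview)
The paper does not supply its own proof of this proposition: it is presented as a reformulation of Lemma~4.3.1 and Proposition~4.3.2 of \cite{ALM} together with Proposition~4.8 of \cite{HK}, with the remark that those proofs ``are, with the necessary changes, valid for our quasihorseshoes instead of horseshoes.'' Your sketch is a correct outline of exactly that argument---iterate the covering relations to get $s^n$ branches, use transitivity and the gap from looseness to gain one more, then feed the resulting $(s^n+1)$-quasihorseshoe into the standard entropy estimate---and matches the route taken in the cited sources.
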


We quote for future reference another result and its immediate consequence.

\begin{lemma}\label{lem:not-full}
If $(J,\mathcal{C})$ is an $s$-quasihorseshoe for a transitive map
$f$ of a real interval $L$ and $J\neq L$, then $f$ has a loose
$s$-quasihorseshoe $(J,\mathcal{D})$.
\end{lemma}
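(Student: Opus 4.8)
The plan is to upgrade the given quasihorseshoe to a loose one without changing $J$, by deleting from each $A_i$ a small open interval that is irrelevant to the covering condition. The whole argument rests on a single consequence of transitivity: since $J$ is a nondegenerate compact subinterval with $J\neq L$, it cannot be forward invariant. Indeed, if $f(J)\subseteq J$, then $U=\operatorname{int}(J)$ would be a nonempty open set with $f^n(U)\subseteq J$ for all $n\ge 1$; as $J\subsetneq L$, the set $L\setminus J$ contains a nonempty open interval $V$ (beyond whichever endpoint of $J$ lies in the interior of $L$), and $f^n(U)\cap V=\emptyset$ for every $n$ would contradict transitivity. Hence $f(J)\not\subseteq J$, so there is a point $x_0\in J$ with $f(x_0)\notin J$; writing $J=[a,b]$, I assume $f(x_0)>b$, the case $f(x_0)<a$ being symmetric.

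Next I would isolate a small region of $J$ that maps completely off $J$. The set $\{x\in\operatorname{int}(J):f(x)>b\}$ is open and nonempty (it contains interior points near $x_0$; if $x_0$ happens to be an endpoint of $J$, continuity spreads the strict inequality $f(x_0)>b$ into $\operatorname{int}(J)$). Choosing a point $p$ in it and a small radius $\rho$, set $W=(p-\rho,p+\rho)$ so that $\overline{W}\subseteq\operatorname{int}(J)$ and $f(\overline{W})\subseteq(b,\infty)$; in particular $f(\overline{W})\cap J=\emptyset$. Then define
\[
D_i=A_i\setminus W\quad(i=1,\dots,s),\qquad \mathcal{D}=\{D_1,\dots,D_s\}.
\]
Each $D_i$ is the intersection of the compact set $A_i$ with a closed set, hence compact, and removing an open interval (whose closure sits strictly inside $(a,b)$) from a finite union of compact intervals again yields a finite union of compact intervals, so condition (a) holds; since $D_i\subseteq A_i$ we have $\operatorname{int}(D_i)\subseteq\operatorname{int}(A_i)$, so the interiors remain pairwise disjoint and (b) holds.

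The crucial point is that deleting $W$ does not destroy the covering property. Fix $i$ and any $y\in J$. Because $J\subseteq f(A_i)$ there is $x\in A_i$ with $f(x)=y\le b$; since $f(W)\subseteq(b,\infty)$ this forces $x\notin W$, i.e. $x\in D_i$, whence $y\in f(D_i)$. Thus $J\subseteq f(D_i)$ for every $i$, giving (c) and also $D_i\neq\emptyset$. Finally $\bigcup_i D_i=\bigl(\bigcup_i A_i\bigr)\setminus W$ misses the nonempty set $W\subseteq J$, so $\bigcup_i D_i\subsetneq J$ and $(J,\mathcal{D})$ is the desired loose $s$-quasihorseshoe.

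I expect the main obstacle to be precisely the preservation of condition (c): the deletion must be chosen so that the removed points contribute nothing to covering $J$, and this is exactly what the arrangement $f(\overline{W})\cap J=\emptyset$ secures. It is here that the hypothesis $J\neq L$ genuinely enters, through the non-invariance of $J$ forced by transitivity; without a point of $J$ escaping $J$ there would be no room to carve out such a $W$, and indeed a full (tight) horseshoe filling an invariant $J=L$ admits no loose refinement of this kind.
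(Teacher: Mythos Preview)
Your argument is correct and more detailed than the paper's. The paper dispatches the lemma in one line: if $(J,\mathcal{C})$ were tight then $f(J)=J$, and a proper compact invariant subinterval contradicts transitivity. That sentence leaves implicit how one passes from ``not tight'' to ``loose'' (these notions are not complementary: a quasihorseshoe with $\bigcup_i A_i=J$ but some $f(A_i)\supsetneq J$ is neither). You instead give a uniform explicit construction---excise from every $A_i$ a small open interval $W\subset J$ whose $f$-image lies entirely outside $J$---which works regardless of whether $\bigcup_i A_i$ already fails to cover $J$. Both routes hinge on the same fact, that $J\neq L$ together with transitivity forces $f(J)\not\subseteq J$; yours simply makes the construction of $\mathcal D$ explicit where the paper does not.

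One cosmetic point: under the paper's convention that an interval has nonempty interior, deleting $W=(p-\rho,p+\rho)$ from a component $[c,d]$ of some $A_i$ could in principle leave an isolated point if $p\pm\rho$ coincides with $c$ or $d$. Choosing $\rho$ so that $p\pm\rho$ avoid the finitely many endpoints of the components of the $A_i$ (or simply discarding any resulting isolated points, which contribute nothing to the inclusion $J\subseteq f(D_i)$) removes the issue.
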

\begin{proof}
If $(J,\mathcal{C})$ is a tight $s$-quasihorseshoe, then $f(J)=J$, but
$J\neq L$ contradicts the transitivity.
\end{proof}

\begin{corollary}\label{cor:half-open-loose}
Let $f$ be a transitive map of open or half-open interval. Then every
quasihorseshoe for $f$ is loose.
\end{corollary}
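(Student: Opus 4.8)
The plan is to argue by contradiction, with the only structural input being that an open or half-open interval is not compact. The starting observation is that for such an $L$ every quasihorseshoe $(J,\mathcal{C})$ has its base $J$ a compact interval, hence $J\subsetneq L$, i.e. $J\neq L$. This is the sole place where the hypothesis on $L$ enters, and it is exactly what is needed to run the argument behind Lemma~\ref{lem:not-full}. So suppose toward a contradiction that some quasihorseshoe $(J,\mathcal{C})$ is not loose; by definition this means $\bigcup_{A\in\mathcal{C}}A=J$.

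First I would normalize the quasihorseshoe so that its branches map exactly onto $J$. Replacing each $A\in\mathcal{C}$ by $A\cap f^{-1}(J)$ and then, using compactness, passing to finitely many of its components whose $f$-images still cover $J$, one obtains sets $A'\subseteq A$ that are finite unions of compact intervals with $f(A')=J$; conditions (a) and (b) are inherited, while (c) is upgraded to the equality $f(A')=J$. If this normalized family still covers $J$, then $(J,\{A'\})$ is tight, whence $f(J)=\bigcup_{A'}f(A')=J$; but then $J$ is a compact invariant interval with nonempty interior and $J\neq L$, so taking $U=\operatorname{int}J$ and $V=L\setminus J$ contradicts transitivity exactly as in Lemma~\ref{lem:not-full}. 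This disposes of the tight case and is the clean, soft half of the argument.

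The hard part is the intermediate alternative, in which $\bigcup_{A}A=J$ but the normalization destroys the cover because some branch genuinely overshoots, $f(A)\supsetneq J$. Here the no-invariant-set argument is no longer enough, and I would instead exploit non-compactness a second time: from $\bigcup A=J$ and $J\subseteq f(A)$ one gets $J\subseteq f(J)$, so the compact intervals $J\subseteq f(J)\subseteq f^2(J)\subseteq\cdots$ increase to an interval $V$ with $f(V)=V$, and since $\overline V$ is then closed invariant with nonempty interior, transitivity forces $\overline V=L$. The real work, which I expect to be the main obstacle, is to turn the fact that the images $f^n(J)$ must exhaust $L$ all the way to its non-closed endpoint(s) into a contradiction; this seems to require the finer structure of transitive interval maps near an open end (the absence of a forward-invariant neighbourhood of an end, density of periodic points, and similar facts quoted in Section~\ref{sec:aux}) rather than the soft argument used for the tight case. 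Once the overshooting alternative is excluded, looseness is the only surviving possibility, which is the assertion.
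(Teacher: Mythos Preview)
Your proposal misreads what needs to be shown and, as a result, chases an impossible goal in its ``hard part''. The paper treats the corollary as an immediate consequence of Lemma~\ref{lem:not-full}: since $J$ is compact and $L$ is not, $J\neq L$, and the lemma applies. In fact the paper's phrasing here is slightly loose: what Lemma~\ref{lem:not-full} actually produces is a loose $s$-quasihorseshoe $(J,\mathcal{D})$ on the same base, not that the \emph{given} $(J,\mathcal{C})$ is itself loose, and this is all that Proposition~\ref{prop:horseshoe-gives-entropy} needs downstream.

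The literal assertion you set out to prove---that the original $(J,\mathcal{C})$ must be loose---is false, and the paper's own construction in Proposition~\ref{prop:half-open-lower-bound} furnishes a counterexample: there $I\cup J\cup K=[u,y]$ equals the base, so that $3$-horseshoe is not loose by definition. This is precisely your ``intermediate alternative'' (union equal to $J$, some branch overshooting), and no appeal to the behaviour near the open end can manufacture a contradiction, because there is none to be had.

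What you have actually established, without noticing it, is the correct statement. Your normalization replaces each $A$ by some $A'\subset A\cap f^{-1}(J)$ with $f(A')=J$; if some $f(A_0)\supsetneq J$, pick $x\in A_0$ with $f(x)\notin J$. Then $x\in J=\bigcup A$ but $x\notin f^{-1}(J)$, so $x\notin A_j'$ for any $j$, whence $\bigcup A'\subsetneq J$ and $(J,\{A'\})$ is already the desired loose quasihorseshoe. Combined with your tight-case contradiction, this is exactly Lemma~\ref{lem:not-full} (indeed with more detail than the paper supplies). So delete the entire ``hard part'' paragraph: the overshooting case is not an obstacle but the place where looseness of the normalized family is automatic.
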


\subsection{Properties of transitive maps from a real interval into itself}

We recall two propositions which generalize results given for
interval maps in \cite{BM-snake} and \cite{BM-do} (see also \cite[pp. 156--59]{BC}). %
They may be proved in much the same way as in original
references, or else can be deduced from previously known results as
noted in \cite[Section 7]{Banks}.
For other properties of transitive map of the real line see also \cite{NSS,NKSS}.

\begin{proposition}\label{prop:tot-trans-alternative}
Let $f$ be a transitive map of a real interval $J$. Then, exactly
one of the following statements holds:
\begin{enumerate}
 \item $f^2$ is transitive,
 \item there exist intervals $K,L\subset J$,
 with $K\cap L=\{c\}$ and $K\cup L=J$, such that $c$ is the unique fixed
 point for $f$,~$f(K)=L$ and $f(L)=K$.
\end{enumerate}
\end{proposition}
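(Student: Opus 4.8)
The two alternatives are mutually exclusive, and I would dispose of this first. If (2) holds, then $f^2(K)=f(L)=K$ and $f^2(L)=f(K)=L$, so both $K$ and $L$ are $f^2$-invariant and each has nonempty interior; every $f^2$-orbit is then trapped in $K$ or in $L$ and must miss the interior of the other set, hence cannot be dense. Since $J$ is a dense-in-itself Baire space, transitivity of $f^2$ is equivalent to the existence of a dense $f^2$-orbit, so $f^2$ is not transitive and (1) fails. As (1) and (2) cannot both hold, and as either $f^2$ is transitive or it is not, it suffices to prove that the failure of (1) forces (2); so from now on the plan is to assume that $f^2$ is not transitive and deduce the swapping structure.

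The first step is to produce a two-set splitting of $J$. Because $f^2$ is not transitive there are nonempty open $U,V$ with $f^{2n}(U)\cap V=\emptyset$ for all $n\ge 0$, and since transitive points form a dense set I may pick a transitive point $x\in U$. Splitting its orbit into even and odd iterates, I set $A=\overline{\{f^{2n}(x):n\ge0\}}$ and $B=\overline{\{f^{2n+1}(x):n\ge0\}}$, so that $A\cup B=J$, $f(A)\subseteq B$ and $f(B)\subseteq A$. The $f^2$-orbit of $x$ avoids $V$, whence $A\neq J$; and if we had $B=J$ then $f(x)$ would have a dense $f^2$-orbit, again making $f^2$ transitive. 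Thus $A$ and $B$ are proper closed subsets covering the connected set $J$, so $A\cap B\neq\emptyset$. Moreover $f(A\cap B)\subseteq f(A)\cap f(B)\subseteq B\cap A$, so $A\cap B$ is $f$-invariant, and it has empty interior because any interior point would lie in $\mathrm{int}(A)\cap\mathrm{int}(B)=\emptyset$. (Alternatively this whole step can be quoted from the theory of regular periodic decompositions of transitive maps in \cite{Banks}.)

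The uniqueness of the fixed point is then immediate and I would record it next: if $p=f(p)$ with $p\notin A\cap B$, say $p\in A\setminus B=J\setminus B$, then $p=f(p)\in f(A)\subseteq B$, a contradiction; hence every fixed point lies in the nowhere-dense invariant set $A\cap B$. It remains to establish the geometric heart of the statement, namely that $A$ and $B$ are genuine subintervals $K,L$ meeting in a single point, and this is where I expect the main obstacle. The input is that $f$ interchanges $A$ and $B$, so $f^2$ carries each set into itself and $x$ has a dense $f^2$-orbit in $A$; thus $f^2|_A$ and $f^2|_B$ are transitive. From this I would argue that neither set is disconnected: two components of $A$ separated by a portion of $\mathrm{int}(B)$ would be relatively clopen in $A$ and permuted by $f^2|_A$, producing a finer regular periodic decomposition of $J$ into consecutive subintervals cyclically permuted by $f$, which the interval classification of such decompositions excludes in the period-two case at hand. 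Connectedness of $J$ together with nowhere-density of $A\cap B$ then forces $A\cap B$ to be a single common endpoint $c$ of two subintervals $K,L$ with $K\cup L=J$, $f(K)=L$ and $f(L)=K$; finally $f(A\cap B)\subseteq A\cap B=\{c\}$ gives $f(c)=c$, and by the previous paragraph $c$ is the unique fixed point, so (2) holds. The argument is uniform over the three models $[0,1]$, $[0,1)$ and $(0,1)$ for $J$, since every step uses only the inclusions $f(A)\subseteq B$, $f(B)\subseteq A$ (not equalities) and the density of transitive points; the delicate passage from the abstract splitting to a subinterval splitting is exactly the point supplied by the cited results of \cite{BM-snake,BM-do,Banks}.
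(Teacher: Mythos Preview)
The paper does not supply its own proof of this proposition: it is explicitly \emph{recalled} from the literature, with the remark that it ``may be proved in much the same way as in original references, or else can be deduced from previously known results as noted in \cite[Section~7]{Banks}.'' Your sketch follows exactly that route---produce a period-two regular decomposition $\{A,B\}$ from the even/odd orbit split of a transitive point, then invoke the interval classification of such decompositions to conclude that $A,B$ are adjacent subintervals---so there is no methodological divergence to compare.

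One local slip: your justification that $A\cap B$ has empty interior is circular. You write that any interior point of $A\cap B$ lies in $\mathrm{int}(A)\cap\mathrm{int}(B)=\emptyset$, but for arbitrary sets one always has $\mathrm{int}(A\cap B)=\mathrm{int}(A)\cap\mathrm{int}(B)$, so the asserted emptiness of the right-hand side is precisely the conclusion you are after. The correct (and equally short) argument uses what you already established: $A\cap B$ is closed and $f$-invariant, so if it contained a nonempty open set then transitivity of $f$ would force $\overline{A\cap B}=J$, whence $A=J$, contradicting $A\neq J$. Your parenthetical pointer to \cite{Banks} covers this, but the inline reasoning should be replaced.

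Beyond that, you correctly flag the passage from the abstract splitting to a genuine subinterval splitting as the substantive step and defer it to \cite{BM-snake,BM-do,Banks}; this is exactly what the paper does, so your treatment is adequate for the purposes of the present article.
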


\begin{proposition}\label{prop:mixing-equivalences}
For a map $f$ of a real interval $L$ the following statements are
equivalent:
\begin{enumerate}
 \item $f$ is bitransitive, that is, $f^2$ is transitive,
 \item $f$ is totally transitive,
 \item $f$ is weakly mixing,
 \item $f$ is mixing,
 \item for every interval $J\subset L$, and for any compact
 interval $K$ contained in the interior of $L$ with respect to
 the natural topology of the real line there is an $N>0$ such
 that $K\subset f^n(J)$ for $n\ge N$.
\end{enumerate}
\end{proposition}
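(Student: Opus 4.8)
The plan is to establish the cyclic chain of implications
\[
(4)\Rightarrow(3)\Rightarrow(2)\Rightarrow(1)\Rightarrow(5)\Rightarrow(4),
\]
split into two qualitatively different parts. The three implications $(4)\Rightarrow(3)\Rightarrow(2)\Rightarrow(1)$ are purely topological-dynamical and hold on any metric space. Here $(2)\Rightarrow(1)$ is immediate, since $(1)$ is the instance $k=2$ of total transitivity. For $(4)\Rightarrow(3)$ I would use that a product of two mixing systems is mixing, so $f$ mixing gives $f\times f$ mixing, hence transitive, i.e.\ $f$ weakly mixing. For $(3)\Rightarrow(2)$ I would invoke the standard fact (the Furstenberg intersection argument) that weak mixing is inherited by all iterates: if $f\times f$ is transitive then every $f^k$ is weakly mixing, in particular transitive, so $f$ is totally transitive. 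None of these three steps sees the interval; the one-dimensional, connected structure of $L$ is used only in $(1)\Rightarrow(5)\Rightarrow(4)$.

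The easy interval implication is $(5)\Rightarrow(4)$. Given nonempty open $U,V\subset L$, choose a nondegenerate subinterval $J\subseteq U$ and a nondegenerate compact interval $K\subseteq V$ lying in the interior of $L$; this is possible because every nonempty open subset of a real interval meets $\mathrm{int}(L)$ in a nondegenerate interval. Then $(5)$ supplies $N$ with $K\subseteq f^n(J)\subseteq f^n(U)$ for all $n\ge N$, so that $f^n(U)\cap V\supseteq K\neq\emptyset$ for all $n\ge N$, which is exactly mixing.

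The crux is $(1)\Rightarrow(5)$. First, bitransitivity excludes alternative $(2)$ of Proposition~\ref{prop:tot-trans-alternative}: were $f$ to swap two halves meeting at a fixed point, then $f^2$ would preserve each half and could not be transitive. Hence $f$ is transitive and not of this swap type. Fix a nondegenerate subinterval $J$ and a compact interval $K\subset\mathrm{int}(L)$, and write $I_n=f^n(J)$. Transitivity forces each $I_n$ to be a nondegenerate interval---a degenerate image would make the forward orbit of the open set $\mathrm{int}(J)$ non-dense---and $\bigcup_n I_n$ is dense in $L$. The decisive step is to find $N$ with $I_N\subseteq f(I_N)=I_{N+1}$; once this holds it propagates, because $A\subseteq f(A)$ gives $f(A)\subseteq f^2(A)$, yielding a nested increasing chain $I_N\subseteq I_{N+1}\subseteq I_{N+2}\subseteq\cdots$. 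Since $K$ is compact and contained in the union of this increasing chain, some $I_{N'}$ already contains $K$, and then $I_n\supseteq K$ for every $n\ge N'$, which is precisely $(5)$.

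I expect two points to carry the real difficulty. The first is producing the self-covering image $I_N\subseteq f(I_N)$: I would obtain it from the density of $\bigcup_n I_n$ together with the existence of a periodic point for a transitive interval map, arranging an image interval that contains such a point and spans across it so that its own image covers it. The second, more delicate point is the exhaustion $\bigcup_{n\ge N}I_n\supseteq\mathrm{int}(L)$: one must rule out that the increasing intervals stabilize inside a proper $f$-invariant subinterval with nonempty interior---exactly the trapping produced by the excluded swap case---so bitransitivity must enter essentially here, for instance through the density of the orbit of $\mathrm{int}(J)$ under $f^2$ on each parity class. An alternative, less self-contained route, indicated in the remark following the statement, deduces the proposition from the known compact-interval versions in \cite{BM-snake,BM-do} and the conjugacy of any real-interval system to one on $[0,1]$, $[0,1)$, or $(0,1)$, as in \cite[Section~7]{Banks}; I would keep this as a fallback should the covering estimates near the endpoints of a non-compact $L$ prove too delicate.
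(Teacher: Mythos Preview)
The paper does not actually prove this proposition: it is stated as a known result, with the remark that it ``may be proved in much the same way as in original references, or else can be deduced from previously known results as noted in \cite[Section~7]{Banks}.'' So your ``fallback'' route---reducing to the compact-interval theorems of Barge--Martin and invoking Banks---\emph{is} the paper's approach, and there is nothing further to compare on that side.

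Your direct attempt is a reasonable outline, and the general implications $(4)\Rightarrow(3)\Rightarrow(2)\Rightarrow(1)$ and $(5)\Rightarrow(4)$ are handled correctly. The sketch of $(1)\Rightarrow(5)$, however, has a genuine gap at the very point you flag as ``the first difficulty.'' Containing a fixed (or periodic) point in $I_N$ does \emph{not} yield $I_N\subseteq f(I_N)$: for instance $I_N=[0,2]$ with fixed point $1$ can be mapped into $[\tfrac12,\tfrac32]$. What one actually gets from a fixed point $p\in I_m$ is only that all later $I_n$ contain $p$, so that the unions $M_k=\bigcup_{n=m}^{m+k}I_n$ form an increasing chain of intervals whose limit $M$ is $f$-invariant and hence (by transitivity) dense; this shows $K$ is covered by some finite union of iterates, not by a single $I_N$. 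Upgrading to a single iterate---equivalently, producing a genuinely self-covering $I_N$---requires an additional argument (for example, locating fixed points of $f^2$ on both sides of a suitable point and using bitransitivity to force the endpoints of some $I_N$ past them), and this is exactly where the excluded swap alternative of Proposition~\ref{prop:tot-trans-alternative} must be used in an essential way. As written, your plan identifies the right obstacles but does not yet supply the mechanism that overcomes them; absent that, the literature route you mention is the complete argument, and it coincides with what the paper does.
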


As an immediate consequence we obtain the following.

\begin{corollary}\label{cor:half-open-mix}
If $f$  is a transitive map of a half-open interval, then $f$ is mixing.
\end{corollary}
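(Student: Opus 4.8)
The plan is to reduce mixing to bitransitivity and then eliminate the exceptional case of Proposition~\ref{prop:tot-trans-alternative}. By Proposition~\ref{prop:mixing-equivalences}, a map of a real interval is mixing if and only if it is bitransitive, so it suffices to show that $f^2$ is transitive. Applying Proposition~\ref{prop:tot-trans-alternative} to the transitive map $f$, exactly one of two alternatives holds: either $f^2$ is transitive, which is precisely what we want, or there are intervals $K,L$ with $K\cap L=\{c\}$ and $K\cup L=J$, where $c$ is the unique fixed point, $f(K)=L$, and $f(L)=K$. My whole task is therefore to rule out this second alternative when $J$ is half-open.

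Since transitivity and mixing are preserved by conjugacy, I may assume $J=[0,1)$. In the second alternative the fixed point $c$ lies in the interior of $J$ and splits it into the two subintervals $K$ and $L$ meeting only at $c$, so that $\{K,L\}=\{[0,c],[c,1)\}$. One of these, namely the one containing the closed endpoint $0$, is the compact interval $[0,c]$; the other, containing the open end, is the half-open interval $[c,1)$, which is not compact. The relations $f(K)=L$ and $f(L)=K$ say that $f$ carries the compact half onto the non-compact half: whichever of $K$ and $L$ equals $[0,c]$ is mapped by $f$ onto $[c,1)$. But the continuous image of a compact set is compact, so $f([0,c])$ cannot equal $[c,1)$, a contradiction.

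Thus the second alternative is impossible, the first must hold, and $f^2$ is transitive. Appealing once more to Proposition~\ref{prop:mixing-equivalences}, $f$ is mixing, as claimed. I expect the only genuinely delicate point to be the observation that in a half-open interval the splitting forced by the exceptional case always produces one compact and one non-compact piece; once that is noticed, the compactness obstruction finishes the argument immediately, with no estimates required.
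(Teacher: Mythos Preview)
Your proof is correct and is exactly the argument the paper has in mind when it calls the corollary an ``immediate consequence'' of Propositions~\ref{prop:tot-trans-alternative} and~\ref{prop:mixing-equivalences}: the second alternative of Proposition~\ref{prop:tot-trans-alternative} forces a continuous surjection from the compact piece $[0,c]$ onto the non-compact piece $[c,1)$, which is impossible. The only detail worth making explicit (and you essentially do) is that $c$ must lie in the interior of $J$, since the paper's convention requires both $K$ and $L$ to have non-empty interior.
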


\section{Lower bounds}\label{sec:lower-bounds}

First we prove existence of the lower bound of \Canovas-\Rodriguez\ entropy for transitive maps of a half-open interval.

\begin{proposition}\label{prop:half-open-lower-bound}
If a map $g$ from the half-open interval $[0,\infty)$ to itself is transitive,
then $g$ has a loose $3$-horseshoe, hence
$\ent(g)> \log3$.
\end{proposition}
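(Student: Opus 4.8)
The goal is to produce a loose $3$-horseshoe for a transitive map $g$ of the half-open interval $[0,\infty)$, since by Corollary~\ref{cor:half-open-loose} every quasihorseshoe for such a map is automatically loose, and then Proposition~\ref{prop:horseshoe-gives-entropy} upgrades a loose $3$-quasihorseshoe to the entropy estimate $\ent(g)>\log 3$. So the real content is to manufacture three compact subintervals of a common compact interval $J\subset[0,\infty)$, with pairwise disjoint interiors, each of which is mapped by $g$ \emph{across} all of $J$ (i.e. $J\subset g(A_i)$).

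The plan is to exploit the strong covering property available for transitive maps of a half-open interval. By Corollary~\ref{cor:half-open-mix}, $g$ is mixing, and hence by Proposition~\ref{prop:mixing-equivalences}(5), for every interval and every compact interval $K$ contained in the interior of the domain there is $N$ with $K\subset g^n(\,\cdot\,)$ for $n\ge N$. First I would fix a suitable compact interval $J$ in the interior $(0,\infty)$ of the domain, chosen so that its $g$-image covers it with plenty of room to spare; the mixing covering property guarantees that some iterate (in fact, arranging $J$ appropriately, the map itself, after passing to $J$ of the right size) spreads $J$ over a region strictly containing $J$ three times over. Concretely, I would locate points in $J$ whose images trace $J$ back and forth: because $0$ is an endpoint that cannot be in the interior, the transitive orbit must repeatedly leave and return to any neighborhood of $J$, forcing the graph of $g$ restricted to $J$ to cross the horizontal strip over $J$ at least three times in alternating directions. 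Each maximal monotone-like piece of the image that stretches across $J$ yields one of the compact subintervals $A_1,A_2,A_3$ with $J\subset g(A_i)$.

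The reason the count is $3$ rather than $2$ (as on the compact interval) is precisely the half-open endpoint: transitivity means the orbit of a dense point must accumulate arbitrarily close to the missing endpoint $0$ and also range over large values, so the image interval $g(J)$ must properly overstretch $J$ on the side toward $0$, producing one extra full crossing compared with the two-sided compact case. I would make this rigorous by choosing $J=[a,b]$ with $a>0$ small and $b$ large, invoking (5) to ensure $g^n([a,b])\supset [a',b']$ for a strictly larger interval, and then—after replacing $g$ by a single iterate if necessary and absorbing that into the statement via $\ent(g^n)=n\,\ent(g)$—reading off three disjoint subintervals each covering $J$. Since the union of the three $A_i$ need not exhaust $J$, and in any case Corollary~\ref{cor:half-open-loose} forbids tightness for a transitive half-open map, the resulting $3$-horseshoe is loose.

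The main obstacle I anticipate is the combinatorial/topological bookkeeping that forces exactly three full crossings: I must rule out the possibility that $g|_J$ only doubles (which would give the weaker $\log 2$ bound, as in the compact interval case) and show that the half-open structure genuinely forces a third crossing. This requires carefully tracking the behavior of $g$ near the endpoint $0$—showing that transitivity prevents $0$ from being a fixed or trapping point and that the orbit's forced excursions toward $0$ create an additional monotone branch of $g$ over $J$—and then verifying the covering condition $J\subset g(A_i)$ for each of the three branches simultaneously, which is where a quantitative use of the mixing covering property in Proposition~\ref{prop:mixing-equivalences}(5) will be indispensable.
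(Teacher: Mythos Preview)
Your proposal has a genuine gap, and the fallback you offer makes it worse rather than better.

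First, the arithmetic of the iterate trick is wrong for this statement. If you only manage to build a $3$-horseshoe for some $g^n$, then Proposition~\ref{prop:horseshoe-gives-entropy} gives $\ent(g^n)>\log 3$, and $\ent(g^n)=n\,\ent(g)$ yields only $\ent(g)>(\log 3)/n$, not $\ent(g)>\log 3$. So ``replacing $g$ by a single iterate if necessary'' cannot be absorbed; the proposition asserts a loose $3$-horseshoe for $g$ itself, and the entropy bound depends on that.

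Second, the mechanism you sketch for getting three full crossings of $J$ under $g$ itself is not an argument. The covering clause in Proposition~\ref{prop:mixing-equivalences}(5) is about \emph{high iterates}, so it tells you nothing directly about the graph of $g$ over a chosen $J$; and the informal claim that the missing endpoint at $\infty$ (or at $0$) ``forces one extra crossing'' compared with the compact case is precisely what needs a proof. As written, nothing in your plan rules out that $g|_J$ only produces two full laps over $J$, which would give merely $\log 2$.

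The paper's proof proceeds along a completely different, constructive line that never invokes mixing or Proposition~\ref{prop:mixing-equivalences}. The key structural fact is that a transitive self-map of $[0,\infty)$ must have an \emph{unbounded increasing sequence of fixed points}: otherwise either $[\bar z,\infty)$ or a compact interval $[0,\omega]$ would be invariant. Fixed points are then used as anchors. One picks a fixed point and follows how the maxima of $g$ over successively larger initial segments outrun those segments (again by transitivity), isolating a point $p$ with $g(p)>p$ inside a maximal open interval $(u,v)$ on which $g(x)>x$ with $u,v$ fixed; then the smallest $q\ge v$ with $g(q)=u$ lies in a maximal interval $(w,y)$ on which $g(x)<x$ with $w,y$ fixed; finally the maximum $d$ of $g$ on $[0,y]$ is attained at some $r\in[u,w]$ with $d>y$. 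The three intervals
\[
I=[u,r],\qquad J=[r,q],\qquad K=[q,y]
\]
then satisfy $[u,y]\subset g(I)\cap g(J)\cap g(K)$ because $g(u)=u$, $g(r)=d>y$, $g(q)=u$, $g(y)=y$. Corollary~\ref{cor:half-open-loose} makes the horseshoe loose, and Proposition~\ref{prop:horseshoe-gives-entropy} gives $\ent(g)>\log 3$. The moral: the extra crossing really does come from the half-open structure, but the way to see it is through the unbounded chain of fixed points and the ``no compact invariant interval'' obstruction, not through mixing covering estimates.
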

\begin{proof}
First note that if for some $x\in [0,\infty)$ we have $g(y)\ge y$ for all $y\in (x,\infty)$, then the interval $[y,\infty)$ is invariant for $g$ for every $y\in (x,\infty)$. This is not possible, since $g$ is transitive on $[0,\infty)$, so for every $x\in [0,\infty)$ there exists a point $y'\in [x,\infty)$ such that $g(y')<y'$.
Note also that for every point $x>0$ there exists a point $y''$ in $[0,x]$ such that $y''\le x < g(y'')$, as otherwise $[0,x]$ would be invariant for $g$, which contradicts the transitivity of $g$.
It follows that at least one point $z_1\in(0,\infty)$ is fixed for $g$.

We claim that in fact there must be an unbounded and increasing sequence $\{z_n\}$ of fixed points for $g$. To see this assume on contrary that $\bar{z}=\max\{x\in [0,\infty):g(x)=x\}$ exists. Clearly, $\bar{z}\ge z_1>0$. Then, either $g(x)>x$ for all $x>\bar{z}$, and as a consequence $[\bar{z},\infty)$ would be invariant for $g$, or $g(x)<x$ for all $x>\bar{z}$, and if we set $\omega=\max g([0,\bar{z}])$ then we would get $g$-invariant set $[0,\omega]$. In any case, we would arrive at contradiction with transitivity of $g$. This proves the claim.

Let $z_1>0$ be a fixed point of $g$ and define $a:=\max g([0, z_1])$. Clearly, $a>z_1$, as otherwise $[0,z_1]$ would be invariant for $g$. Consider $b:=\max g([0,a])$. By transitivity of $g$ on $[0,\infty)$ we see that $[0,a]$ can not be invariant for $g$, so $b>a$, moreover $b=\max g([z_1,a])$, since $b$ can not be attained in $[0,z_1]$. Let $p\in [z_1,a]$ be a point such that $g(p)=b$. We have $g(p)=b>a\geq p$, and there exists a maximal open interval $(u,v)$ containing $p$ such that for every $x$ in $(u,v)$ we have $g(x)>x$, and $u$ and $v$ are fixed points for $g$. Obviously, $u\in [z_1,a]$, and it is possible that $z_1=u$. There exists $y>v$ such that $g(y)<u$, as the interval $[u, +\infty)$ is not invariant for $g$ and $g(x)>u$ for $x\in (u,v]$. Choose $q$ to be the smallest element of the nonempty set $\{x\in [v,\infty):g(x)=u\}$. As $g(q)=u<q$,  we can find a maximal open interval $(w,y)$ containing $q$ such that $g(x)<x$ for every $x$ in $(w,y)$, and $w$ and $y$ are fixed for $g$. Let $d:=\max g([0,y])$ ($b=d$ is possible). By transitivity, $d>y$, and our previous definitions assert that $d=\max g([u,y])$, and there is a point $r\in [u,w]$ such that $g(r)=d$ (the latter comes from the fact that $g(x)\leq x\leq t$ for $x\in [w,y]$). Let $I=[u,r]$, $J=[r,q]$, $K=[q,y]$. It is clear that $[u,y]\subset g(I)\cap g(J) \cap g(K)$,
so $g$ has a 3-horseshoe, which is loose by Corollary \ref{cor:half-open-loose} and using Proposition \ref{prop:horseshoe-gives-entropy} we get $\ent(g)>\log3$, and the proof is finished.

\end{proof}
To show the existence of the lower bound for transitive maps of the real line we consider cases, depending on the number of fixed points of a map.

\begin{proposition}\label{prop:two-fixed}
If a transitive map of the real line $f$ has at least two fixed points,
then $f$ has a loose $2$-horseshoe, hence
$\ent(f)> \log{2}$.
\end{proposition}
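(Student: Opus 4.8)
The plan is to prove that a transitive map $f\colon\R\to\R$ with at least two fixed points has a loose $2$-horseshoe, and then invoke Proposition~\ref{prop:horseshoe-gives-entropy} to conclude $\ent(f)>\log 2$. Let me sketch how I would find such a horseshoe.

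First I would fix two fixed points of $f$, say $p<q$, and consider the compact interval $[p,q]$. Since $f$ is transitive on $\R$, the interval $[p,q]$ cannot be invariant (otherwise transitivity would fail, as $\R$ is not compact and a proper invariant compact set blocks density of orbits). Hence $f$ must map some point of $[p,q]$ outside $[p,q]$, either to the left of $p$ or to the right of $q$. The key structural idea is to locate a subinterval $J$ of $\R$ whose $f$-image covers $J$ twice, i.e.\ contains two subintervals each mapping onto $J$ --- this is exactly a $2$-horseshoe. The natural candidate is to use the two fixed points as ``anchors'': between two fixed points the map stays put at the endpoints, so any excursion of $f$ below $p$ or above $q$ forces the graph of $f$ to cross the diagonal in a way that produces a fold, and combined with the endpoint behavior this yields an interval covered twice.

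Concretely, I would argue along the following lines. Because $[p,q]$ is not invariant, choose a point mapped out of $[p,q]$; without loss of generality (replacing $f$ by a reflection if needed) suppose some $x_0\in[p,q]$ has $f(x_0)>q$. On the other side, transitivity also prevents $(-\infty,q]$ or $[p,\infty)$ from being invariant, so one can also find a point whose image lies below $p$ or otherwise far away; the goal is to exhibit two disjoint compact subintervals $A_1,A_2\subset J$ (for a suitable compact $J$) with $J\subset f(A_1)$ and $J\subset f(A_2)$. Using the intermediate value theorem on the continuous $f$, the fixed points $p,q$ give $f(p)=p$ and $f(q)=q$, while the excursion gives a value exceeding $q$; piecing these monotone-image arguments together should produce one subinterval whose image stretches across $[p,q]$ and a second one supplied by a symmetric excursion or by the return dynamics forced by transitivity. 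I would set $J=[p,q]$ (or a slightly enlarged interval containing the relevant images) and extract $A_1,A_2$ as the maximal subintervals on which $f$ sweeps across $J$.

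The main obstacle I anticipate is organizing the case analysis cleanly: depending on whether the ``escapes'' from $[p,q]$ happen on the same side or on opposite sides, and on the relative positions of the fixed points with respect to the escaping orbits, the construction of the two covering subintervals differs. The delicate part is guaranteeing \emph{two} disjoint subintervals each of whose image covers all of $J$, rather than just one fold; this is where I expect to lean hardest on transitivity (to rule out invariant half-lines and one-sided behavior) together with the presence of two distinct fixed points. Once the $2$-horseshoe $(J,\{A_1,A_2\})$ is in hand, looseness is automatic either because $J\neq\R$ (so Lemma~\ref{lem:not-full} applies) or by a direct check, and Proposition~\ref{prop:horseshoe-gives-entropy} delivers the strict inequality $\ent(f)>\log 2$.
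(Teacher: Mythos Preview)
Your reduction to finding a $2$-horseshoe and then invoking Corollary~\ref{cor:half-open-loose} (or Lemma~\ref{lem:not-full}) together with Proposition~\ref{prop:horseshoe-gives-entropy} is exactly right. The gap is in the construction of the horseshoe itself: taking $J=[p,q]$ between two \emph{arbitrary} fixed points does not work. With $f(p)=p$, $f(q)=q$, and a single excursion $f(x_0)>q$ for some $x_0\in(p,q)$, the subinterval $[p,x_0]$ does cover $[p,q]$, but $[x_0,q]$ only covers $[q,f(x_0)]$ and need not reach down to $p$; an escape on the other side likewise gives only one covering subinterval. So you cannot in general extract two subintervals of $[p,q]$ each of whose image contains $[p,q]$, and a ``symmetric excursion'' is not forced by transitivity alone. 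Your case analysis, as outlined, does not close.

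The paper supplies the missing idea. First, pass to \emph{adjacent} fixed points: since the fixed-point set of a transitive map is closed and nowhere dense, there are fixed points $a<b$ with no fixed point in $(a,b)$, so without loss of generality $f(x)>x$ on $(a,b)$. Next, use transitivity on the half-line rather than on $[a,b]$: $[a,\infty)$ cannot be invariant, and since $f(x)\ge a$ on $[a,b]$ there is a smallest $c>b$ with $f(c)=a$. Now take $J=[a,c]$. Non-invariance of $[a,c]$ forces $z:=\max f([a,c])>c$, attained at some interior $d\in(a,c)$ (since $f(a)=f(c)=a$). Then $[a,d]$ and $[d,c]$ each have images containing $[a,z]\supset[a,c]$, and this is the $2$-horseshoe. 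The key point you were missing is that the base interval is not $[a,b]$ but its enlargement out to the first preimage $c$ of the left endpoint; this guarantees both endpoints of $J$ map to $a$, creating the second fold automatically with no case analysis.
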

\begin{proof}By Corollary \ref{cor:half-open-loose} and Proposition \ref{prop:horseshoe-gives-entropy} it is enough to find a $2$-horseshoe for $f$. To this end, note that the set of fixed points for a transitive map is always closed and nowhere dense. Therefore we can find fixed points $a$ and $b$ for $f$ such that $a<b$ and no fixed point of $f$ belongs to $(a,b)$. This implies that $f(x)>x$ for all $x\in(a,b)$ or $f(x)<x$ for all $x\in(a,b)$.  We assume that the former inequality holds for all $x$ in $(a,b)$. The later case can be handled the same way. By transitivity $[a,\infty)$ cannot be invariant for $f$, therefore there is a point $c>b$ such that $c=\min\{x>b:f(x)=a\}$.
Let $z=\max f([a,c])$, and let $d\in [a,c]$ be a point such that $f(d)=z$. We see that $z>c$, otherwise $[a,c]$ would be invariant for $f$. Now, $I=[a,d]$, and $J=[d,c]$ form a $2$-horseshoe for $f$ as claimed.
\end{proof}

\begin{proposition}\label{prop:unique-fix}
If a transitive map of the real line $f$ has a unique fixed point,
then $f^2$ has a loose $3$-horseshoe, hence
$\ent(f)> \log\sqrt{3}$.
\end{proposition}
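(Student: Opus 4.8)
The plan is to first pin down the local behaviour of $f$ at its unique fixed point $c$, then split according to Proposition~\ref{prop:tot-trans-alternative}: I will reduce the non-bitransitive case to the half-open interval result already proved, and treat the bitransitive case by constructing a horseshoe for $f^2$ directly.

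First I would record the sign pattern of $f-\mathrm{id}$. Since $c$ is the only fixed point, $f(x)-x$ keeps a constant sign on each of the rays $(-\infty,c)$ and $(c,\infty)$. A transitive map of $\mathbb{R}$ admits no proper closed invariant set with nonempty interior (if $A$ were such a set, then $\mathrm{int}(A)$ and $\mathbb{R}\setminus A$ witness non-transitivity). Applying this to the half-lines rules out $f((-\infty,c])\subseteq(-\infty,c]$ and $f([c,\infty))\subseteq[c,\infty)$, forcing $f(x)>x$ for $x<c$ and $f(x)<x$ for $x>c$. The same principle shows that $f$ overshoots $c$ on both sides: there are $x_1<c$ with $f(x_1)>c$ and $x_0>c$ with $f(x_0)<c$, since otherwise one of the half-lines would be invariant. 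These two facts encode the fold that $f$ creates around $c$.

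By Proposition~\ref{prop:tot-trans-alternative} exactly one of two cases occurs. In the \emph{non-bitransitive} case there are intervals with $K\cap L=\{c\}$, $K\cup L=\mathbb{R}$ and $f(K)=L$, $f(L)=K$; say $K=(-\infty,c]$. Then $f^2(K)=K$, and $f^2|_K$ is transitive by a standard argument: for relatively open $U,V\subseteq K$ transitivity of $f$ gives $f^n(U)\cap V\neq\emptyset$, and since $f$ swaps $K$ and $L$ this forces $n$ to be even. As $K$ is homeomorphic to a half-open interval, Proposition~\ref{prop:half-open-lower-bound} yields a loose $3$-horseshoe for $f^2|_K$, which is in particular a loose $3$-horseshoe for $f^2$. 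Applying Proposition~\ref{prop:horseshoe-gives-entropy} to the \emph{transitive} map $f^2|_K$ produces a compact invariant $M\subseteq K$ with $h(f^2|_M)>\log 3$; since $M$ is $f^2$-invariant in $\mathbb{R}$ and $\ent(f^2)=2\ent(f)$, this gives $\ent(f)>\log\sqrt{3}$.

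In the \emph{bitransitive} case $f^2$ is transitive on $\mathbb{R}$, which is an open interval, so by Corollary~\ref{cor:half-open-loose} \emph{any} quasihorseshoe for $f^2$ is automatically loose; it therefore suffices to exhibit a single $3$-horseshoe for $f^2$, after which Proposition~\ref{prop:horseshoe-gives-entropy} and $\ent(f^2)=2\ent(f)$ finish the argument as above. To build it I would exploit the fold at $c$: choosing $x_1<c$ with $\beta:=\max f([x_1,c])>c$, the intermediate value theorem gives $[c,\beta]\subseteq f([x_1,c])$, and transitivity (no invariant $[c,\beta]$) forces $\min f([c,\beta])<c$, so $f^2([x_1,c])$ covers an interval reaching strictly below $c$; the symmetric construction on the right yields coverings reaching strictly above $c$. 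The main obstacle is to organise these coverings into three subintervals of one common target interval $J$, each mapped \emph{over} $J$ by $f^2$ with the branches lying inside $J$ (genuine laps), rather than a loose collection of one-sided coverings. I expect this to require choosing $J$ as a suitable interval straddling $c$ and tracking several intermediate-value coverings, possibly with a short sub-case analysis according to how $f$ crosses $c$ near the fixed point; this bookkeeping of the laps of $f^2$ is the heart of the proof. With looseness automatic, the entropy bound $\ent(f)>\log\sqrt{3}$ then follows.
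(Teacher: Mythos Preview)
Your overall architecture matches the paper's exactly: establish the sign pattern of $f-\mathrm{id}$, invoke Proposition~\ref{prop:tot-trans-alternative}, dispose of the non-bitransitive case via Proposition~\ref{prop:half-open-lower-bound}, and in the bitransitive case build a $3$-horseshoe for $f^2$ directly (looseness being automatic by Corollary~\ref{cor:half-open-loose}). Case~I is essentially complete and correct.

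The genuine gap is Case~II. You have written down some correct preliminary coverings, but you yourself identify the ``main obstacle'' and then do not resolve it; the proposal ends with an expectation rather than a construction. That is precisely the substantive part of the proof, and your suggested heuristic --- look for the target interval $J$ \emph{straddling} the fixed point --- is not how the paper succeeds. In the paper's argument the horseshoe for $f^2$ lives entirely on \emph{one side} of the fixed point $z$. The key starting ingredient, available only in the bitransitive case, is a point $a\neq z$ with $f(a)=z$ (if no such $a$ existed the half-lines would be exchanged and we would be back in Case~I). Taking, say, $a<z$, one then sets $b=\min\{x>z:f(x)=a\}$, so that $[z,b]$ is folded by $f$ into $[a,b]$; tracking the successive maxima and minima of $f$ on the growing intervals $[a,b]$, $[a,c]$, $[d,c]$ (each strictly enlarging the previous by transitivity, using $f(x)<x$ for $x>z$ and $f(x)>x$ for $x<z$ to localise where the extremes are attained) produces points $z<s<b<t$ with $f^2(s)=c>b$, $f^2(b)=z$, and $f^2(t)=e>c$. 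The three intervals $[z,s]$, $[s,b]$, $[b,t]$ then each $f^2$-cover $[z,t]$. This one-sided bookkeeping is what you are missing; without it the bitransitive case is not proved.
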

\begin{proof}By Corollary \ref{cor:half-open-loose} and Proposition \ref{prop:horseshoe-gives-entropy} it is enough to find a $3$-horseshoe for $f^2$.
Let $z$ be the unique fixed point of $f$. It follows from transitivity and uniqueness of $z$ that
$f(x)>x$ for all $x<z$, and $f(x)<x$ for all $x>z$. Hence, $f([\alpha,z])\subset(\alpha,\infty)$ for any $\alpha<z$, and $f([z,\beta])\subset(-\infty,\beta)$ for any $z<\beta$. Let $A=(-\infty,z]$ and $B=[z,\infty)$. Clearly, $B\subset f(A)$ and $A\subset f(B)$. We have two cases:

\emph{Case I.} It holds $f(A)=B$, and $f(B)=A$, equivalently $f^2$ is not transitive. Applying Proposition \ref{prop:tot-trans-alternative} we can assert that $f^2$ restricted to $X=B$ is a transitive self-map of the half-open interval $B$. Let $g$ denote the map $f^2|_B$. Then Proposition \ref{prop:half-open-lower-bound} applies to $g$, hence
$g=f^2$ has a loose 3-horseshoe and we get $
\ent(f)=(1/2)\ent(f^2)>\log \sqrt{3}$, and the proof for the first case is finished.

\emph{Case II.} We have $B\subset f(A)$ and $A\subset f(B)$, but $B\subsetneq f(A)$, or $A\subsetneq f(B)$, equivalently $f^2$ is transitive. It follows that $z=f(a)$ for some $a\neq z$. Without loss of generality we assume $a<z$, that is, $B\subsetneq f(A)$. Let $b=\min\{x>z:f(x)=a\}$, so $f(x)\ge a$ for $x\in [a,b]$, in particular, $f([z,b])\subset [a,b]$. Hence, $c:=\max f([a,b])=\max f([a,z])$. If $c \le b$, then $f([a,b])\subset [a,b]$, violating transitivity. Therefore there is a point $p\in [a,z]$ such that $f(p)=c>b$. Moreover, $\max f([a,c])=c$, as $f(x)< x$ for $x\in [b,c]$. But $[a,c]$ can not be invariant for $f$, hence $\min f([a,c])<a$. It is clear that $d=\min f([a,c])$ must be attained at some point $q\in [b,c]$. Let $e=\max f([d,c])$. As above, we can see that $e=f(r)>c$ for some $r\in [d,a]$. We get that $f^2(b)=z$, and $[a,z]\subset f([z,b])$. Therefore there exists a point $s\in (z,b)$ such that $f(s)=p$, hence $f^2(s)=c$. Similarly, $[d,a]\subset f([b,c])$ and there exists a point $t\in (b,c)$ such that $f(t)=r$, hence $f^2(t)=e>c$. Let $I=[z,s]$, $J=[s,b]$, and $K=[b,t]$. Then the triple $(I,J,K)$ forms a loose $3$-horseshoe for $f^2$, and we get $\ent(f)=(1/2)\ent(f^2)>\log \sqrt{3}$, and the proof for the second case is finished.
\end{proof}

\begin{theorem}\label{thm:main-bound}
If $f$ is a transitive map of the real line, then $\ent(f) > \log \sqrt{3}$.
\end{theorem}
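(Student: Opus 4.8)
The plan is to deduce the theorem directly from Propositions~\ref{prop:two-fixed} and~\ref{prop:unique-fix} by splitting into cases according to the number of fixed points of $f$. The only extra ingredient needed is the observation that a transitive map of the real line must have at least one fixed point, so that these two propositions together exhaust all possibilities.

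To see that $f$ has a fixed point, I would argue by contradiction. Suppose $f(x)\neq x$ for every $x\in\R$. Since $x\mapsto f(x)-x$ is continuous and never vanishes, the intermediate value theorem forces it to keep a constant sign; thus either $f(x)>x$ for all $x$, or $f(x)<x$ for all $x$. In the first case every forward orbit $\{f^n(x):n\ge 0\}$ is strictly increasing, hence either converges to a limit, which would be a fixed point and is excluded, or tends to $+\infty$; in either situation no orbit can be dense in $\R$. The case $f(x)<x$ is symmetric. Since $\R$ is a dense-in-itself Baire space, transitivity is equivalent to the existence of a dense orbit, so the absence of a fixed point contradicts the transitivity of $f$. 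Hence $f$ has at least one fixed point.

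With this in hand the proof concludes by a two-case dichotomy. If $f$ has a unique fixed point, Proposition~\ref{prop:unique-fix} gives $\ent(f)>\log\sqrt{3}$ at once. If $f$ has at least two fixed points, Proposition~\ref{prop:two-fixed} yields $\ent(f)>\log 2$, and since $2>\sqrt{3}$ we have $\log 2>\log\sqrt{3}$, so again $\ent(f)>\log\sqrt{3}$. As every transitive map of the real line falls into exactly one of these two cases, the bound $\ent(f)>\log\sqrt{3}$ holds in all of them. I expect no genuine obstacle here: the entire content has been carried by the two propositions, and the only new, but routine, step is ruling out fixed-point-free transitive maps via the sign argument above.
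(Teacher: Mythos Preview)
Your proof is correct and follows exactly the same two-case split as the paper, invoking Propositions~\ref{prop:two-fixed} and~\ref{prop:unique-fix} and the inequality $\log 2>\log\sqrt{3}$. The only difference is that you explicitly verify the existence of a fixed point via the sign-of-$f(x)-x$ argument and the dense-orbit characterisation of transitivity, whereas the paper's proof tacitly assumes at least one fixed point exists; your addition is sound and makes the dichotomy genuinely exhaustive.
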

\begin{proof}Either $f$ has a unique fixed point, or there are at least two fixed points for $f$.
In the first case we invoke Proposition \ref{prop:unique-fix}, and in the second case we use Proposition \ref{prop:two-fixed} and observe that $\log 2 > \log \sqrt{3}$.
\end{proof}

\section{Examples}\label{sec:examples}

In this section we define examples showing that the bounds obtained in Proposition \ref{prop:half-open-lower-bound} and Theorem \ref{thm:main-bound} are best possible. These examples are also used as counterexamples to some claims from \cite{Canovas} (see Section \ref{sec:remarks}) and in Section \ref{sec:specification}.
\begin{notation}For the rest of this section we fix $\eps>0$ and choose any $\lambda>3$
such that $\log\lambda<\log 3+\varepsilon$. We define points 
\begin{align*}
p_1 &= \frac {1}{\lambda},          & q_1&=\frac{\lambda+1}{4\lambda},  &   p_2 &= \frac{\lambda-1}{2\lambda},&
p_3 &= \frac{\lambda+1}{2\lambda},  & q_2&=\frac{3\lambda-1}{4\lambda}, &   p_4 &= \frac{\lambda-1}{\lambda},
\end{align*}
and intervals
\begin{align*}
P_1 &= [ 0 ,p_1],& Q_1 &= [p_1,q_1],& Q_2 &= [q_1,p_2], & P_2 &= [p_2,p_3],& Q_3 &= [p_3,q_2],\\
Q_4 &= [q_2,p_4],& P_3 &= [p_4, 1 ],& R_1 &= [ 0 ,q_1], & R_2 &= [q_1,q_2],& R_3 &= [q_2, 1 ].
\end{align*}
\end{notation}

\begin{figure}
  \hfill
  \begin{minipage}[t]{.45\textwidth}
    \begin{center}
     \includegraphics[angle=0,width=0.95\textwidth]{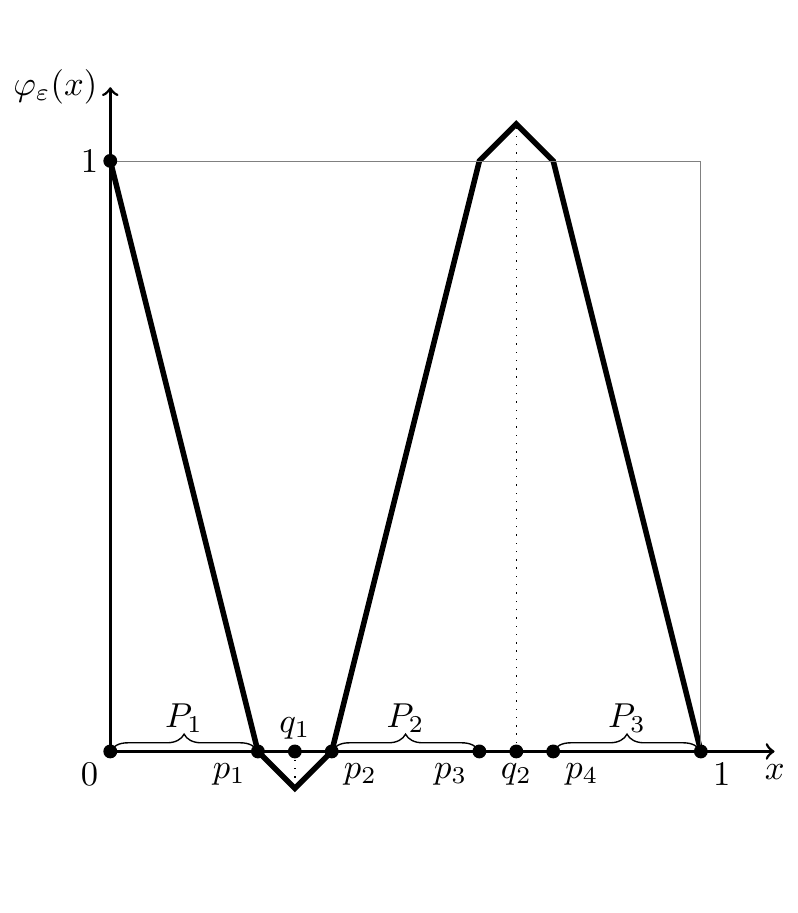}
      \caption{\label{fig:phi} Plot of $\varphi_\eps$.}
    \end{center}
  \end{minipage}
  \hfill
  \begin{minipage}[t]{.45\textwidth}
    \begin{center}
      \includegraphics[angle=0,width=0.95\textwidth]{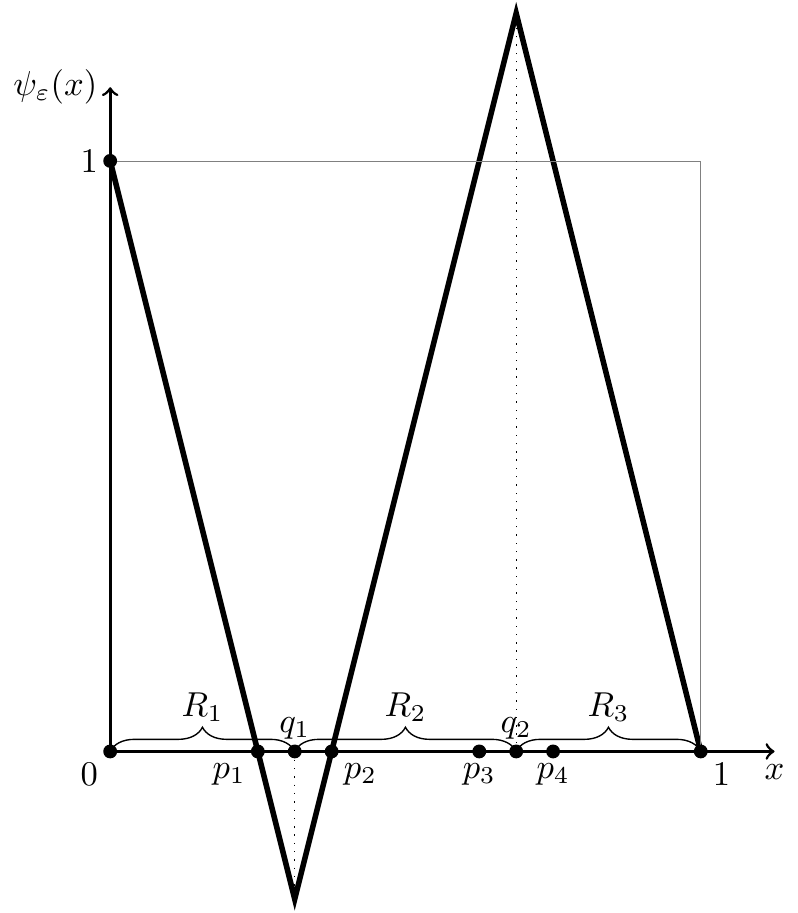}
      \caption{\label{fig:psi} Plot of $\psi_\eps$.}
    \end{center}
  \end{minipage}
  \hfill
\end{figure}

\begin{definition} Let $\varphi_\eps\colon [0,1]\mapsto \mathbb{R}$ be a map given by by the following formula (see Figure~\ref{fig:phi}):
\[
\varphi_\eps(x)=
\begin{cases}
1-\lambda x,&\text{for } x\in P_1,\\
\frac{1}{\lambda} - x,&\text{for } x\in Q_1,\\
x-\frac{\lambda-1}{2\lambda},&\text{for } x\in Q_2,\\
\lambda x-\frac{\lambda-1}{2},&\text{for }x\in P_2,\\
x+\frac{\lambda-1}{2\lambda},&\text{for }x\in Q_3,\\
\frac{2\lambda-1}{\lambda}-x,&\text{for }x\in Q_4,\\
\lambda-\lambda x,&\text{for }x\in P_3.
\end{cases}
\]
\end{definition}
\begin{definition}
Let $\psi_\eps\colon [0,1]\mapsto \mathbb{R}$ be a map given by the following formula (see Figure~\ref{fig:psi}):
\[
\psi_\eps(x)=
\begin{cases}
1-\lambda x,&\text{for } x\in R_1,\\
\lambda x-\frac{\lambda-1}{2},&\text{for } x\in R_2,\\ 
\lambda-\lambda x,&\text{for } x\in R_3.
\end{cases}
\]
\end{definition}

\begin{example}\label{ex:inf-is-log-sqrt-3}
We define a map $F_\eps\colon
\mathbb{R}\mapsto \mathbb{R}$ by setting
\begin{equation}\label{eq:extend}
F_\eps(x)=
\begin{cases}
-x,&\text{if }x>0,\\
\varphi(x-\lfloor x\rfloor) - \lfloor x\rfloor-1&\text{if }x\le 0,
\end{cases}
\end{equation}
where $\lfloor x\rfloor$ denotes the greatest integer function and gives the largest integer less than or equal to $x$
(see Figure \ref{fig:F_eps}).
\end{example}

\begin{figure}
  \hfill
  \begin{minipage}[t]{.45\textwidth}
    \begin{center}
     \includegraphics[angle=0,width=0.95\textwidth]{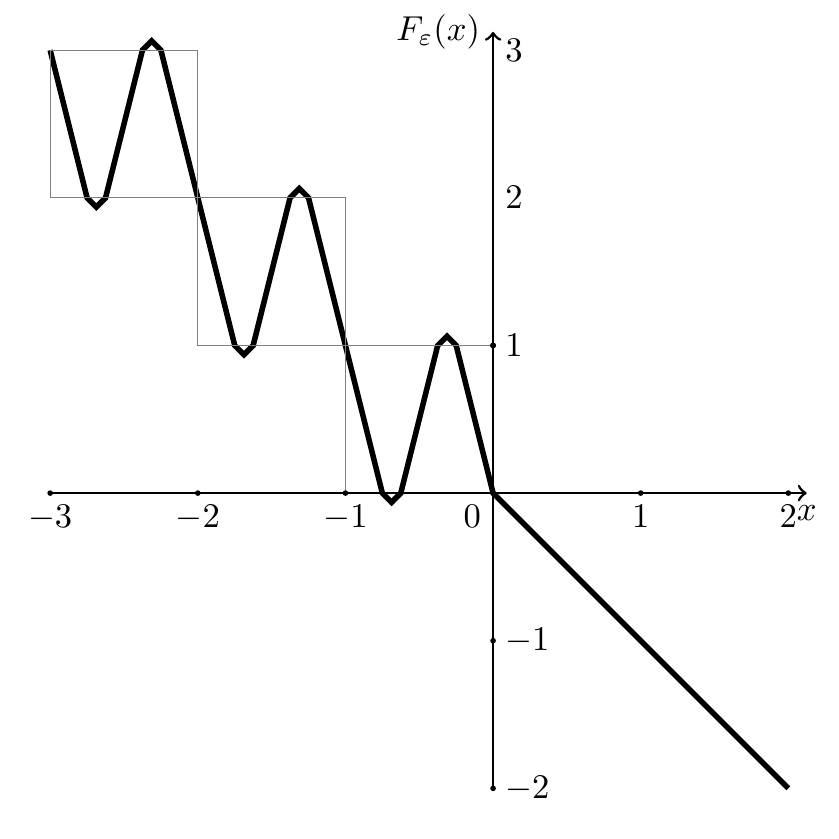}
      \caption{\label{fig:F_eps} Plot of $F_\eps$.}
    \end{center}
  \end{minipage}
  \hfill
  \begin{minipage}[t]{.45\textwidth}
    \begin{center}
      \includegraphics[angle=0,width=0.95\textwidth]{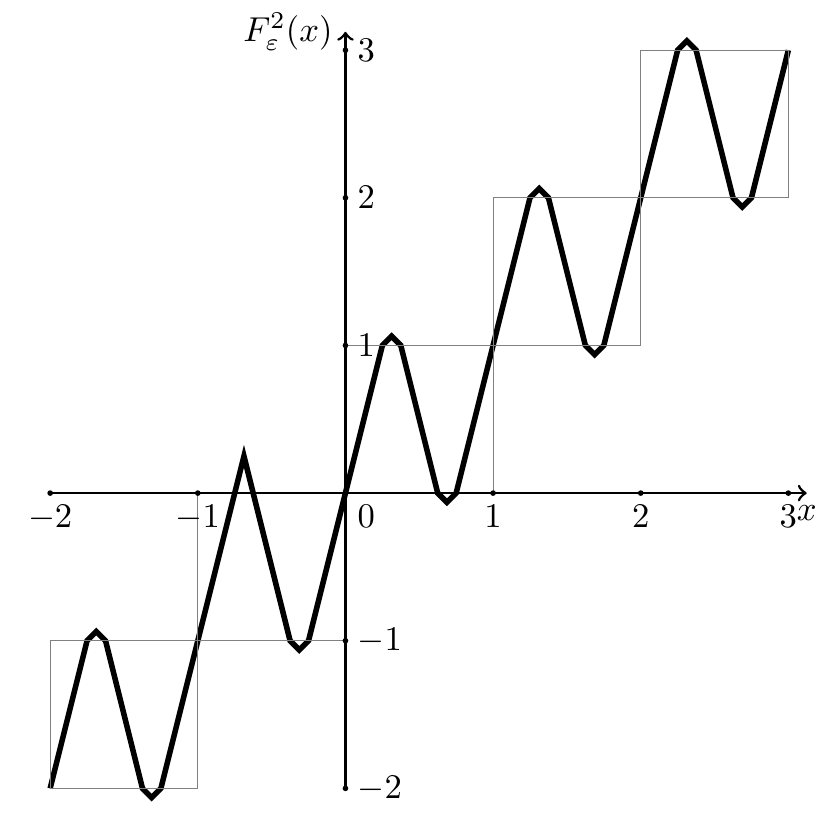}
      \caption{\label{fig:F2_eps} Plot of $F^2_\eps$.}
    \end{center}
  \end{minipage}
  \hfill
\end{figure}

\begin{proposition}\label{prop:best-bound-bitrans}
For every $\eps>0$ the map $F_\eps$ defined in Example \ref{ex:inf-is-log-sqrt-3} is mixing and its \Canovas-\Rodriguez\ entropy fulfils $\log\sqrt{3}< \ent(F_\eps)\le \log\sqrt{3}+\eps$.
\end{proposition}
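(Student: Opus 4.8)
The plan is to obtain the two bounds from very different facts: the lower bound is essentially free once mixing is established, the upper bound rests on a small contraction-type observation, and the bulk of the work is the proof of mixing.

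First, note that once $F_\eps$ is known to be mixing it is in particular transitive, and since $F_\eps$ is a self-map of the whole real line, Theorem~\ref{thm:main-bound} immediately gives $\ent(F_\eps)>\log\sqrt3$. So the lower bound needs nothing beyond mixing.

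For the upper bound I would argue as follows. The map $F_\eps$ is continuous and piecewise linear, with every slope equal to $\pm1$ or $\pm\lambda$, hence Lipschitz with constant $\lambda$. The key observation is that $F_\eps$ attains negative values only along laps of slope $\pm1$: for $x>0$ one has $F_\eps(x)=-x<0$ with slope $-1$; and for $x\le0$, the term $-\lfloor x\rfloor-1$ is at least $1$ as soon as $\lfloor x\rfloor\le-2$, so $F_\eps(x)<0$ forces $x\in[-1,0)$ with $x+1$ in the region $(p_1,p_2)$ on which $\varphi_\eps$ has slope $\pm1$. Consequently, in the product $(F_\eps^2)'(x)=F_\eps'(F_\eps(x))\,F_\eps'(x)$ at least one factor has modulus $1$: if $F_\eps(x)>0$ then near $F_\eps(x)$ the map is the reflection $y\mapsto-y$, so the first factor is $-1$; if $F_\eps(x)<0$ then the second factor has modulus $1$ by the observation. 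Thus $|(F_\eps^2)'|\le\lambda$ wherever defined, so $F_\eps^2$ is $\lambda$-Lipschitz. Using that a $\lambda$-Lipschitz map restricted to a compact invariant set has entropy at most $\max\{0,\log\lambda\}=\log\lambda$, we get $\ent(F_\eps^2)\le\log\lambda$, and then from $\ent(F_\eps^2)=2\,\ent(F_\eps)$,
\[
\ent(F_\eps)=\tfrac12\ent(F_\eps^2)\le\tfrac12\log\lambda<\tfrac12(\log3+\eps)=\log\sqrt3+\tfrac\eps2\le\log\sqrt3+\eps.
\]

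The hard part is mixing. Since $\R$ is an open interval, it suffices to verify condition~(5) of Proposition~\ref{prop:mixing-equivalences}: for every nondegenerate interval $J$ and every compact interval $K$ there is $N$ with $K\subseteq F_\eps^n(J)$ for all $n\ge N$. The engine is that $F_\eps^n(J)$ is always an interval $[a_n,b_n]$ (continuous image of a connected set), together with three mechanisms. First, every slope has modulus $\ge1$ and equals $\lambda$ on the three \emph{expanding laps} $P_1,P_2,P_3$ of each unit copy of $\varphi_\eps$; one checks that the slope-$1$ laps (the $Q$-laps and the positive half-line) are carried into an expanding lap within a bounded number of steps, so lengths cannot stay bounded and some iterate of $J$ engulfs a $P$-lap, whose image is a full unit interval. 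Second, the reflection $x\mapsto-x$ converts positive reach into negative reach (if $b_n>0$ then $a_{n+1}\le-b_n$), while the outermost lap $P_3$ of each copy expands the boundary sliver by $\lambda$; this makes the reach $-a_n$ nondecreasing every two steps and prevents it from converging to a finite limit, so $a_n\to-\infty$ and symmetrically $b_n\to+\infty$. Third, because $F_\eps^n(J)=[a_n,b_n]$ is an interval, $a_n\le-R$ and $b_n\ge R$ already force $[-R,R]\subseteq F_\eps^n(J)$. Combining these, for $K\subseteq[-R,R]$ one gets $K\subseteq F_\eps^n(J)$ for all large $n$, which is condition~(5), and mixing follows.

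I expect the main obstacle to be the second mechanism: proving that the reach \emph{tends} to infinity (not merely that arbitrarily long images occur), so that the covering holds for \emph{all} large $n$, as condition~(5) requires. This forces one to check that the reach is monotone every two iterates and that the $\lambda$-expansion on $P_3$ excludes a finite limit; the bookkeeping is elementary but must be carried out carefully for every $\lambda\in(3,7)$, where the boundary sliver is shorter than an expanding lap and grows only gradually before it finally covers $P_3$.
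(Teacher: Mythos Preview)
Your proposal is correct and follows essentially the same strategy as the paper: the lower bound from Theorem~\ref{thm:main-bound}, the upper bound from the $\lambda$-Lipschitz property of $F_\eps^2$, and mixing from the fact that $\inf F_\eps^m(K)\to-\infty$ and $\sup F_\eps^m(K)\to+\infty$ for every compact interval $K$. The paper's own proof is terser: it simply asserts that ``it is easy to see'' both that $F_\eps^2$ is $\lambda$-Lipschitz and that the limits in \eqref{eqn:limes2} hold, whereas you supply the mechanism for the first claim (at every point one of the two factors in $(F_\eps^2)'$ has modulus $1$, because $F_\eps$ takes negative values only along laps of slope $\pm1$) and a sketch for the second. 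Your Lipschitz argument is the real content of the example and is correct; the mixing sketch is adequate and matches what the paper leaves to the reader.
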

\begin{proof}
Observe that $\varphi_\eps$ has slope $\pm\lambda$ on intervals $P_1$,
$P_2$, $P_3$, and slope $\pm 1$ on other intervals of
monotonicity. Moreover, it is easy to see that $F_\eps$, and $F^2_\eps$ are Lipschitz with
constant $\lambda$ (see Figures \ref{fig:F_eps} and \ref{fig:F2_eps}). Therefore for every compact $F^2_\eps$-invariant set $K\subset\mathbb{R}$
we have $h(F^2_\eps|_K)\le \log \lambda <\log 3 + \eps$. It follows that
for every compact $F_\eps$-invariant set $K\subset\mathbb{R}$ we have
$h(F_\eps|_K)=(1/2)h(F^2_\eps|_K)\le(1/2) \log \lambda \le (1/2)\log 3 + \eps$. Hence
$(1/2)\log 3 < \ent(F_\eps) \le (1/2)\log 3 + \eps$,
where the lower bound comes from Theorem \ref{thm:main-bound}.
It remains to observe that $F_\eps$ is mixing, since it is easy to see that
\begin{equation}\label{eqn:limes2}
\lim_{m\to\infty} (\inf F^{m}(K))=-\infty\textrm{ and
}\lim_{m\to\infty}(\sup F^{m}(K))=+\infty,
\end{equation}
for every compact interval $K\subset\mathbb{R}$.
\end{proof}

\begin{theorem}\label{thm:inf-is-log-sqrt-3}
Let $\mathcal{T}(\mathbb{R})$ denote the family of all transitive and continuous maps of the real line.
Then $\inf\{\ent(f):f\in \mathcal{T}(\mathbb{R})\}=\log\sqrt{3}$, and no map in $\mathcal{T}(\mathbb{R})$.
can attain this bound.
\end{theorem}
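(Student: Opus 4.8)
The plan is to assemble the final theorem from two pieces that are, by this point, almost entirely in hand: the lower bound $\log\sqrt3$ with non-attainability, and the matching sequence of examples realizing the infimum. The statement $\inf\{\ent(f):f\in\mathcal T(\mathbb R)\}=\log\sqrt3$ is really a conjunction of three assertions: (i) $\ent(f)>\log\sqrt3$ for every $f\in\mathcal T(\mathbb R)$, which gives $\inf\ge\log\sqrt3$ and simultaneously rules out attainment; (ii) $\inf\le\log\sqrt3$, realized by approximating examples; and (iii) no $f$ achieves $\ent(f)=\log\sqrt3$, which is immediate from the strict inequality in (i).

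First I would invoke Theorem~\ref{thm:main-bound}, which already establishes that every transitive map $f$ of the real line satisfies the \emph{strict} inequality $\ent(f)>\log\sqrt3$. This single fact does double duty: taking the infimum over $f\in\mathcal T(\mathbb R)$ yields $\inf\{\ent(f):f\in\mathcal T(\mathbb R)\}\ge\log\sqrt3$, and the strictness shows directly that no transitive map can have $\ent(f)=\log\sqrt3$, so the bound is not attained. Thus part (iii) costs nothing beyond citing the strict inequality.

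Next I would establish the reverse inequality $\inf\le\log\sqrt3$ by exhibiting transitive maps with entropy arbitrarily close to $\log\sqrt3$ from above. For this I would use the family $F_\eps$ from Example~\ref{ex:inf-is-log-sqrt-3}: by Proposition~\ref{prop:best-bound-bitrans}, each $F_\eps$ is mixing (hence transitive, so $F_\eps\in\mathcal T(\mathbb R)$) and satisfies $\log\sqrt3<\ent(F_\eps)\le\log\sqrt3+\eps$. Letting $\eps\downarrow 0$ along this family shows that the infimum cannot exceed $\log\sqrt3$, giving $\inf\{\ent(f):f\in\mathcal T(\mathbb R)\}\le\log\sqrt3$. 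Combining the two inequalities yields equality $\inf=\log\sqrt3$, completing the main claim, while non-attainability was already secured in the previous step.

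The proof is therefore essentially a bookkeeping argument: the genuine mathematical content lives in Theorem~\ref{thm:main-bound} (the lower bound, whose substance is the horseshoe constructions in Propositions~\ref{prop:two-fixed} and~\ref{prop:unique-fix}) and in the verification of the properties of $F_\eps$. The only point demanding a moment's care is the logical structure: one must note that the \emph{strict} inequality is essential both for non-attainment and for confirming that the sequence of entropies approaches the infimum from strictly above, so that the infimum value itself is never realized. There is no single hard obstacle remaining at this stage; the difficulty has already been discharged in the construction of the approximating examples and in proving the strict lower bound, and the task here is to combine these correctly.
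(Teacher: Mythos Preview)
Your proposal is correct and follows exactly the paper's own proof: cite Theorem~\ref{thm:main-bound} for the strict lower bound (hence non-attainability), and cite Proposition~\ref{prop:best-bound-bitrans} for the approximating family $F_\eps$ realizing the infimum. The only difference is that you spell out the $\eps\downarrow 0$ argument and the logical structure more explicitly than the paper's two-sentence version.
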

\begin{proof}It follows from Theorem \ref{thm:main-bound} that $\log\sqrt{3}$ is a lower bound, which can not be attained by any map from $\mathcal{T}(\mathbb{R})$. Proposition \ref{prop:best-bound-bitrans} shows that this bound is the best possible.
\end{proof}

\begin{example}\label{ex:inf-not-bi-is-log-sqrt-3}
First, divide the interval $(-\infty,0]$ into intervals
$\{I_k\}_{k=-\infty}^{+\infty}$ (overlapping at the endpoints),
where $I_k= \big[ -2^{-k},-2^{-k-1}  \big]$. Let $h_k$ denote an affine, orientation preserving homeomorphism,
which maps $[0,1]$ onto $I_k$. We define
\[
G_\eps(x)=\begin{cases}
h_{-k} \circ \psi \circ h^{-1}_k,&\text{if }x\in I_k\text{ for some }k\in\mathbb{Z},\\
-x,&\text{if }x>0.
\end{cases}
\]
See Figure \ref{fig:G_eps}.
\end{example}

\begin{figure}
  \hfill
  \begin{minipage}[t]{.45\textwidth}
    \begin{center}
     \includegraphics[angle=0,width=0.95\textwidth]{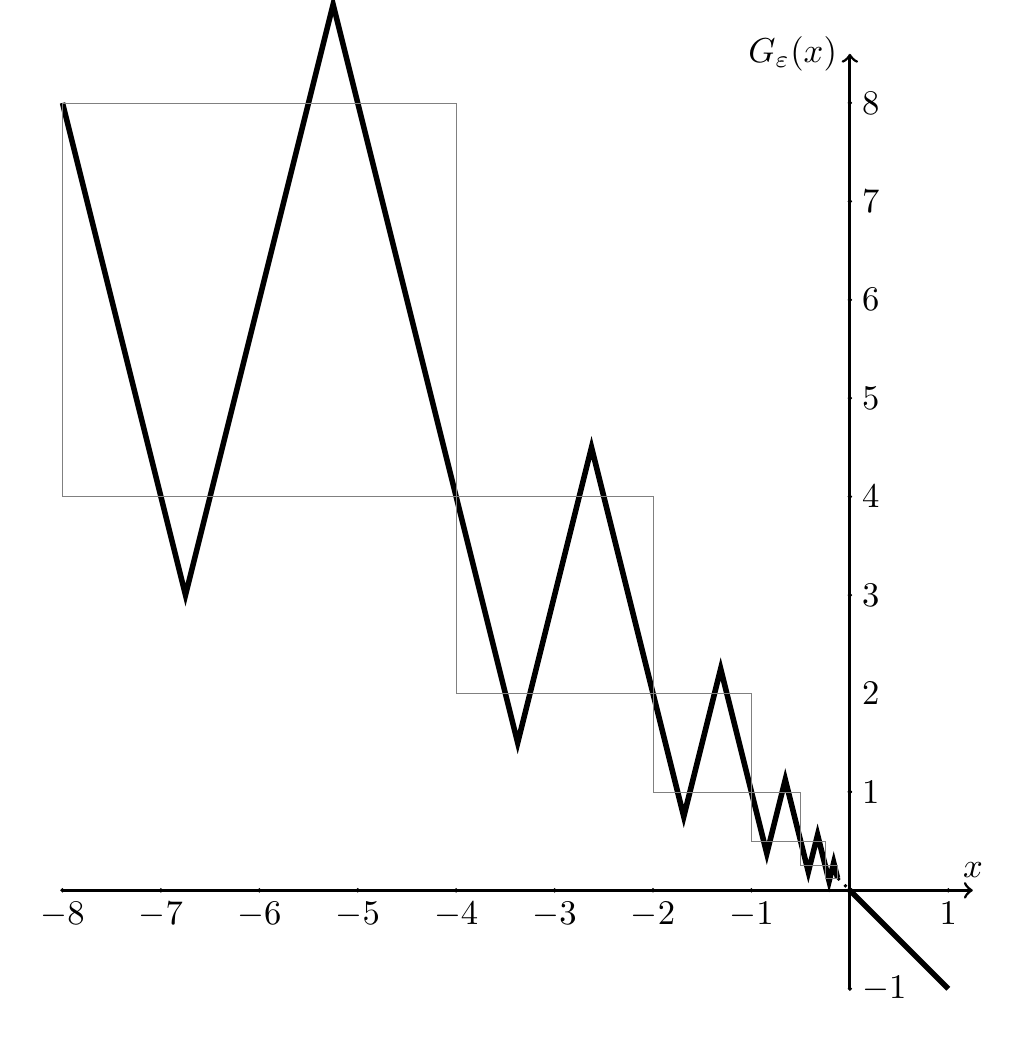}
      \caption{\label{fig:G_eps} Plot of $G_\eps$.}
    \end{center}
  \end{minipage}
  \hfill
  \begin{minipage}[t]{.45\textwidth}
    \begin{center}
      \includegraphics[angle=0,width=0.95\textwidth]{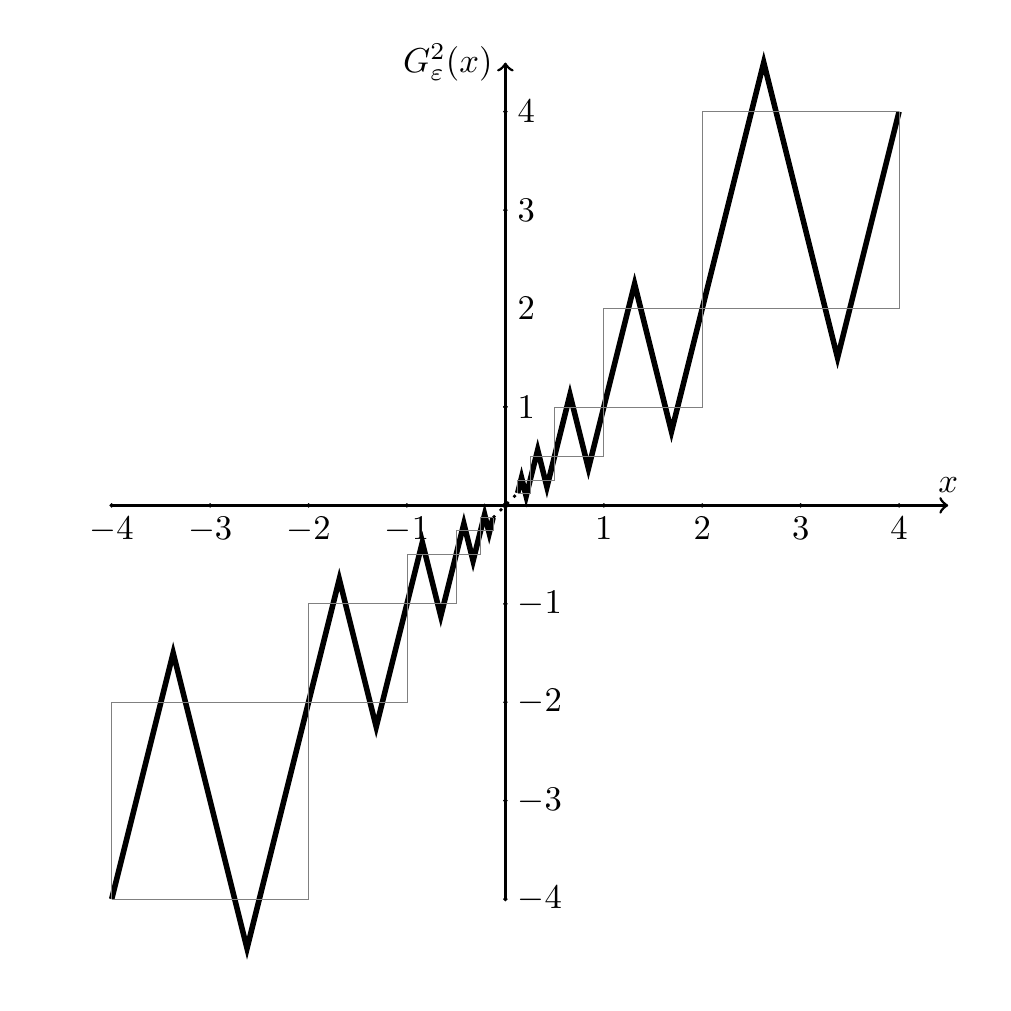}
      \caption{\label{fig:G2_eps} Plot of $G^2_\eps$.}
    \end{center}
  \end{minipage}
  \hfill
\end{figure}

\begin{proposition}\label{prop:best-bound-not-bi}
For every $\eps>0$ the map $G_\eps$ defined in Example \ref{ex:inf-not-bi-is-log-sqrt-3} is transitive, but not bitransitive, has a unique fixed point, and its \Canovas-\Rodriguez\ entropy fulfils $\log\sqrt{3}< \ent(G_\eps)\le \log\sqrt{3}+\eps$.
\end{proposition}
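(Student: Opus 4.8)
The plan is to verify the four asserted properties one at a time, following the template of the proof of Proposition~\ref{prop:best-bound-bitrans} but adapting it to the self-similar geometry of $G_\eps$ near its fixed point. First I would record the dynamical skeleton of $G_\eps$ straight from the definition: the reflection $x\mapsto-x$ sends $[0,\infty)$ onto $(-\infty,0]$, while the glued copies of $\psi_\eps$ send $(-\infty,0]$ onto $[0,\infty)$, so $G_\eps$ interchanges the two half-lines $A=(-\infty,0]$ and $B=[0,\infty)$, with $A\cap B=\{0\}$ and $G_\eps(0)=0$. Two of the claims are then immediate. Since $G_\eps(A)=B$ and $G_\eps(B)=A$, any fixed point must lie in $A\cap B=\{0\}$, so $0$ is the unique fixed point; and the same swapping places us in alternative~(2) of Proposition~\ref{prop:tot-trans-alternative}, whence $G^2_\eps$ is not transitive, i.e.\ $G_\eps$ is not bitransitive.

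Next I would prove transitivity. Because $G_\eps$ swaps $A$ and $B$, it suffices to show that $g:=G^2_\eps|_B$ is transitive on the half-open interval $B$; transitivity of $G_\eps$ on $\R$ then follows in the standard way. Up to the affine conjugacies $h_k$, the map $g$ is modelled on an iterate of $\psi_\eps$, and $\psi_\eps$ has three full branches (each of $R_1,R_2,R_3$ is stretched across all of $[0,1]$, with overshoot). From this I would read off the covering property of Proposition~\ref{prop:mixing-equivalences}(5) for $g$ — equivalently, the analogue of \eqref{eqn:limes2} — so that $g$ is mixing, in particular transitive. The lower entropy bound is then free: $G_\eps$ is a transitive map of the real line, so Theorem~\ref{thm:main-bound} yields $\ent(G_\eps)>\log\sqrt3$.

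The heart of the matter — and the step I expect to be the main obstacle — is the upper bound $\ent(G_\eps)\le\log\sqrt3+\eps$. The Lipschitz argument of Proposition~\ref{prop:best-bound-bitrans} fails here: because the copies of $\psi_\eps$ are pasted in by the affine maps $h_k$ with ratios $2^{k}$, the slopes of $G_\eps$ (and of $G^2_\eps$) blow up as $x\to0$, so $G^2_\eps$ is not globally Lipschitz. The way around this is to exploit the self-similarity. From $h_{k-1}=S\circ h_k$, where $S(x)=2x$, one checks that $G^2_\eps$ commutes with the doubling $S$ on $\R\setminus\{0\}$, hence descends to a continuous self-map $\bar g$ of the quotient of $\R\setminus\{0\}$ by $S$. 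In the logarithmic coordinate $x\mapsto\log|x|$, where $S$ becomes a unit translation, $\bar g$ is piecewise affine with the same slopes as $\psi_\eps$ — the scaling factors $2^{\pm 2k}$ cancel the Euclidean blow-up and leave exactly $\pm\lambda$ — so $\bar g$ is Lipschitz with constant $\lambda$ and $h(\bar g)\le\log\lambda$.

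It remains to transfer this bound to $G_\eps$. Any compact $G^2_\eps$-invariant set $K$ is bounded, so away from $0$ it meets each fibre of the covering projection $\pi$ in finitely many points; since a finite-to-one factor map preserves topological entropy, $h(G^2_\eps|_K)=h(\bar g|_{\pi(K)})\le\log\lambda$. Hence $\ent(G^2_\eps)\le\log\lambda<\log3+\eps$, and using $\ent(G_\eps)=\tfrac12\ent(G^2_\eps)$ we get $\ent(G_\eps)\le\tfrac12\log\lambda\le\log\sqrt3+\eps$. The delicate point to handle carefully is precisely this transfer when $K$ accumulates at the fixed point $0$, where $\pi$ ceases to be finite-to-one: here I would use that $0$ is an expanding fixed point of $G^2_\eps$ (Euclidean slopes tending to $\infty$, while the log-coordinate slope stays $\lambda$), so that the local dynamics at $0$ carries zero entropy and the estimate $h(G^2_\eps|_K)\le\log\lambda$ survives the accumulation.
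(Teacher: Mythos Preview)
Your overall plan—read off the swap $G_\eps(A)=B$, $G_\eps(B)=A$, deduce the unique fixed point and non-bitransitivity via Proposition~\ref{prop:tot-trans-alternative}, prove transitivity by showing $g=G^2_\eps|_B$ is mixing, and get the lower bound from Theorem~\ref{thm:main-bound}—is exactly the paper's argument. The only divergence is the upper entropy bound.

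Here the paper is much more direct than you are: it simply asserts that, because $\psi_\eps$ has slope $\pm\lambda$ on each lap, both $G_\eps$ and $G^2_\eps$ are Lipschitz with constant $\lambda$ (citing Figures~\ref{fig:G_eps}--\ref{fig:G2_eps}), and then invokes the Lipschitz--entropy inequality to get $h(G^2_\eps|_K)\le\log\lambda$ for every compact invariant $K$. You are right that the \emph{printed} formula $h_{-k}\circ\psi_\eps\circ h_k^{-1}$ does not deliver this: the chain rule gives slope $\pm\lambda\cdot 4^{k}$ on $I_k$, and the same computation shows the formula sends $I_k$ near $I_{-k}\subset(-\infty,0)$, so it does not even swap the half-lines. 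The paper is evidently relying on the map drawn in the figures rather than on the displayed formula; taking the intended map, the one-line Lipschitz argument goes through and your quotient machinery is unnecessary.

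If you want to salvage the bound for a genuinely non-Lipschitz, self-similar model, your commutation $G^2_\eps\circ S=S\circ G^2_\eps$ is the right observation, but the finish can be lighter than a quotient-circle construction. First argue that every compact $G^2_\eps$-invariant set $K\neq\{0\}$ is bounded away from $0$ (a nonzero point close to $0$ is sent far away and then escapes to infinity under iteration, contradicting compactness); then conjugate $K$ by an appropriate power of $S$ into the region where the piecewise slopes are at most $\lambda$, and read off $h(G^2_\eps|_K)\le\log\lambda$ from the Lipschitz bound there. Your final paragraph (``expanding fixed point, local dynamics carries zero entropy'') does not close this gap as stated: the issue is not the entropy contribution of the germ at $0$, but the existence of compact invariant sets accumulating there, and that is what you must rule out.
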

\begin{proof}
Observe that $\psi_\eps$ has slope $\pm\lambda$ on every interval of monotonicity,
hence $G_\eps$, and $G^2_\eps$ are Lipschitz with constant $\lambda$
(see Figures \ref{fig:G_eps} and \ref{fig:G2_eps}).
Therefore for every non-empty compact $G^2_\eps$-invariant set $K\subset\mathbb{R}$
we have $h(G^2_\eps|_K)\le \log \lambda <\log 3 + \eps$. It follows that
for every compact $G_\eps$ invariant set $K\subset\mathbb{R}$ we have
$h(G_\eps|_K)=(1/2)h(G^2_\eps|_K)\le(1/2) \log \lambda \le (1/2)\log 3 + \eps$. Hence
$(1/2)\log 3 < \ent(G_\eps) \le (1/2)\log 3 + \eps$,
where the lower bound comes from Proposition \ref{prop:unique-fix}.
It is also clear that $G_\eps([0,\infty))=(-\infty,0]$ and $G_\eps((-\infty,0])=[0,\infty)$, so $[0,\infty)$ and $(-\infty,0]$ are invariant subsystems for $G^2_\eps$. Moreover, $G^2_\eps|_{[0,\infty)}$ and $G^2_\eps|_{(-\infty,0]}$ are transitive. Hence $G_\eps$ is transitive, while $G^2_\eps$ is not.
\end{proof}

\begin{theorem}\label{thm:inf-not-bi-is-log-sqrt-3}
Let $\mathcal{T}'(\mathbb{R})$ denote the family of all transitive, but not bitransitive, and continuous maps of the real line.
Then $\inf\{\ent(f):f\in \mathcal{T}'(\mathbb{R})\}=\log\sqrt{3}$, and there is no map in $\mathcal{T}'(\mathbb{R})$ attaining this bound.
\end{theorem}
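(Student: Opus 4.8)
The plan is to derive both assertions by feeding the class $\mathcal{T}'(\mathbb{R})$ into the unique-fixed-point machinery already developed and then matching the resulting bound with the family $G_\eps$. First I would establish the lower bound together with its non-attainability. Take any $f\in\mathcal{T}'(\mathbb{R})$. Since $f$ is transitive but not bitransitive, $f^2$ fails to be transitive, so Proposition \ref{prop:tot-trans-alternative} must place us in its alternative~(2): there are intervals $K,L$ with $K\cap L=\{c\}$, $K\cup L=\mathbb{R}$, $f(K)=L$, $f(L)=K$, and $c$ is the \emph{unique} fixed point of $f$. Thus every member of $\mathcal{T}'(\mathbb{R})$ automatically has exactly one fixed point, and Proposition \ref{prop:unique-fix} applies verbatim to give $\ent(f)>\log\sqrt{3}$. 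The strictness of this inequality simultaneously shows that $\log\sqrt{3}$ is a lower bound for the infimum and that no map in $\mathcal{T}'(\mathbb{R})$ can attain it.

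It remains to show the bound is sharp, that is, the infimum equals $\log\sqrt{3}$. For this I would invoke the explicit family from Example \ref{ex:inf-not-bi-is-log-sqrt-3}. By Proposition \ref{prop:best-bound-not-bi}, for each $\eps>0$ the map $G_\eps$ is transitive but not bitransitive, so $G_\eps\in\mathcal{T}'(\mathbb{R})$, and it satisfies $\ent(G_\eps)\le\log\sqrt{3}+\eps$. Consequently $\inf\{\ent(f):f\in\mathcal{T}'(\mathbb{R})\}\le\log\sqrt{3}+\eps$ for every $\eps>0$, whence the infimum is at most $\log\sqrt{3}$. Combined with the strict lower bound from the previous paragraph, this forces the infimum to equal $\log\sqrt{3}$ while confirming that it is never realized.

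Since all the substantial work is carried out by Proposition \ref{prop:unique-fix} and by the verification inside Proposition \ref{prop:best-bound-not-bi}, I expect no genuine obstacle at this stage. The only step requiring care is the opening observation that membership in $\mathcal{T}'(\mathbb{R})$ forces a unique fixed point: this is precisely the content of alternative~(2) in Proposition \ref{prop:tot-trans-alternative}, once one records that ``not bitransitive'' means exactly ``$f^2$ not transitive.'' It is this identification that allows the unique-fixed-point results to cover the entire class uniformly, with no further case analysis needed, so that the proof reduces to assembling the two propositions.
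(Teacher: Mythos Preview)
Your proof is correct and follows essentially the same approach as the paper: the paper invokes Theorem~\ref{thm:main-bound} for the strict lower bound and Proposition~\ref{prop:best-bound-not-bi} for sharpness, while you unpack the first citation by observing via Proposition~\ref{prop:tot-trans-alternative} that every map in $\mathcal{T}'(\mathbb{R})$ has a unique fixed point and then applying Proposition~\ref{prop:unique-fix} directly. This is the same machinery cited one level deeper, and the argument goes through without issue.
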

\begin{proof}
It follows from Theorem \ref{thm:main-bound} that $\log\sqrt{3}$ is a lower bound, which can not be attained by any map from $\mathcal{T}'(\mathbb{R})$. Proposition \ref{prop:best-bound-not-bi} shows that this bound is the best possible.
\end{proof}

\begin{theorem}\label{thm:inf-half-open}
Let $\mathcal{T}([0,\infty))$ denote the family of all transitive and continuous maps of the half-open interval.
Then $\inf\{\ent(f):f\in \mathcal{T}([0,\infty))\}=\log 3$, and there is no map in $\mathcal{T}([0,\infty))$ attaining this bound.
\end{theorem}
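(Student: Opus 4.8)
The statement splits into a lower bound together with non-attainability, and a matching family of examples. The lower bound is already in hand: if $f\in\mathcal{T}([0,\infty))$, then Proposition~\ref{prop:half-open-lower-bound} produces a loose $3$-horseshoe for $f$, whence $\ent(f)>\log 3$. Consequently $\inf\{\ent(f):f\in\mathcal{T}([0,\infty))\}\ge\log 3$, and, since the inequality $\ent(f)>\log 3$ is strict, no map in $\mathcal{T}([0,\infty))$ attains the value $\log 3$. It therefore remains to prove that $\log 3$ is the best possible lower bound, i.e.\ to exhibit, for every $\eps>0$, a transitive map $H_\eps$ of $[0,\infty)$ with $\ent(H_\eps)\le\log 3+\eps$.

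The plan is to build $H_\eps$ from the same $3$-horseshoe block $\psi_\eps$ used in Examples~\ref{ex:inf-is-log-sqrt-3} and \ref{ex:inf-not-bi-is-log-sqrt-3}, now tiled along the half-line. With $\lambda>3$ and $\log\lambda<\log 3+\eps$ fixed as above, I would cover $[0,\infty)$ by the unit cells $C_n=[n,n+1]$, $n\ge 0$, and place on each $C_n$ an affine translate of $\psi_\eps$, arranged so that $H_\eps$ maps $C_n$ onto $C_n$ covered three times, with the small symmetric overshoot $\delta=(\lambda-3)/4$ reaching into the neighbouring cells $C_{n-1}$ and $C_{n+1}$. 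Because all the pieces are translates of $\psi_\eps$, the map is automatically Lipschitz with constant $\lambda$. The only place where this naive recipe leaves $[0,\infty)$ is the lower overshoot of the leftmost cell $C_0$; there I would modify the block near $0$ (for instance by folding the part of the graph that would drop below $0$ back above $0$, or by replacing it with a reflected connector), keeping the three full horseshoe legs intact and the Lipschitz constant equal to $\lambda$.

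Granting this construction, the entropy estimate is immediate: since $H_\eps$ is Lipschitz with constant $\lambda$, every non-empty compact $H_\eps$-invariant set $K$ satisfies $h(H_\eps|_K)\le\max\{0,\log\lambda\}=\log\lambda$, so that $\ent(H_\eps)\le\log\lambda<\log 3+\eps$; combined with Proposition~\ref{prop:half-open-lower-bound} this yields $\log 3<\ent(H_\eps)\le\log 3+\eps$. The substantive point is to verify that $H_\eps$ is transitive. By Corollary~\ref{cor:half-open-mix} transitivity is equivalent to mixing, and by Proposition~\ref{prop:mixing-equivalences}(5) it suffices to show that for every interval $J\subset[0,\infty)$ and every compact interval $K$ contained in $(0,\infty)$ there is $N$ with $K\subset H_\eps^{\,n}(J)$ for all $n\ge N$. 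I would prove this in two steps. First, each of the three legs of the horseshoe on a cell maps onto the whole cell and the legs are expanding (slope $\pm\lambda$), so $H_\eps^{\,m}(J)$ eventually contains an entire cell. Second, the overshoot provides nearest-neighbour coupling: once some iterate of $J$ contains a cell $C_m$, it contains the $\delta$-sliver of $C_{m+1}$ (and of $C_{m-1}$), and tracking this sliver through the expanding legs one checks that after finitely many further iterates the image contains all of $C_{m+1}$ (and of $C_{m-1}$). Iterating, $H_\eps^{\,n}(J)$ swallows cells in both directions and in particular eventually covers any prescribed $K$; this is the half-line analogue of \eqref{eqn:limes2}.

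The main obstacle is exactly this transitivity-by-spreading argument. One has to show rigorously that the \emph{nearest-neighbour} overshoot, although of size only $\delta\to 0$ as $\lambda\to 3$, is nevertheless enough to propagate an interval across every cell and out to infinity, so that no far cell (nor the boundary region near $0$) remains dynamically isolated; the bookkeeping of how a $\delta$-sliver in a freshly reached cell re-expands to fill that cell, together with the verification that the boundary modification at $0$ neither destroys a horseshoe leg nor raises the Lipschitz constant above $\lambda$, are the delicate parts. Once transitivity is established, the theorem follows by combining the two halves above.
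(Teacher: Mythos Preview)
Your lower-bound half is exactly the paper's: Proposition~\ref{prop:half-open-lower-bound} gives the strict inequality $\ent(f)>\log 3$ for every $f\in\mathcal{T}([0,\infty))$, so the infimum is at least $\log 3$ and is not attained.

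For the upper bound your strategy (exhibit, for each $\eps>0$, a transitive self-map of $[0,\infty)$ which is Lipschitz with constant $\lambda$) is sound, but the concrete construction you describe has a gap that is more than bookkeeping. If on each cell $C_n=[n,n+1]$ you place the translate $x\mapsto n+\psi_\eps(x-n)$, the resulting map is \emph{not} continuous at the integers: from the formulas one has $\psi_\eps(0)=1$ and $\psi_\eps(1)=0$, so the one-sided values at $x=n$ are $n+1$ (from $C_n$) and $n-1$ (from $C_{n-1}$). Simple reflections or sign changes on alternating cells do not repair this while keeping the image of $C_n$ centered on $C_n$ with symmetric overshoot, as you require. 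Thus before one even gets to the transitivity argument you flag as the ``main obstacle'', the map $H_\eps$ has to be redesigned; the boundary issue is not only at $0$.

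The paper avoids all of this by recycling an example already on the shelf. It takes $G_\eps$ from Example~\ref{ex:inf-not-bi-is-log-sqrt-3}, which is transitive but not bitransitive on $\mathbb{R}$; by Proposition~\ref{prop:tot-trans-alternative} the half-lines $(-\infty,0]$ and $[0,\infty)$ are swapped by $G_\eps$, so $G_\eps^2|_{[0,\infty)}$ is a transitive self-map of $[0,\infty)$. Since (as shown in Proposition~\ref{prop:best-bound-not-bi}) $G_\eps^2$ is Lipschitz with constant $\lambda$, every compact invariant set for $G_\eps^2|_{[0,\infty)}$ has entropy at most $\log\lambda<\log 3+\eps$. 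No new construction, no gluing, and no separate transitivity argument are needed.
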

\begin{proof}
It follows from Proposition \ref{prop:half-open-lower-bound} that $\log 3$ is a lower bound, which can not be attained by any map from $\mathcal{T}([0,\infty))$. Let $G^2_\eps|_{[0,\infty)}$ be the restriction of the second iterate of the map defined in Example~\ref{ex:inf-not-bi-is-log-sqrt-3} to the half-open interval $[0,\infty)$. From the proof of Proposition \ref{prop:best-bound-not-bi}
we know that for every compact $G^2_\eps$-invariant set $K\subset [0,\infty)$ we have
$h(G^2_\eps|_K)\le \log \lambda \le \log 3 + \eps$, which shows that $\log 3$ is the best possible bound for \Canovas-\Rodriguez\ entropy of maps from $\mathcal{T}([0,\infty))$.
\end{proof}

\section{Remarks to \Canovas\ article \cite{Canovas}} \label{sec:remarks}

\begin{remark}
Part $(d)$ of Theorem 1 in \cite{Canovas} is false. It is in general not true that a bitransitive map of the interval $[0,1]$ has to have at least two fixed points, nor that such a map has topological entropy necessarily greater or equal to $\log 2$. The author of \cite{Canovas} compiled Theorem 1 from various results scattered through literature (he cites \cite{KS} and \cite[Chapter VII]{BC}) and mistakenly overstated it, as there is no such theorem neither in \cite{KS}, nor in \cite{BC}. Actually, the correct statement should be the following (for more details we refer the reader to Proposition 4.3.9 and Example 4.4.5 of \cite{Ruette}).
\begin{proposition} For every $\eps>0$ there exists a bitransitive map $f\colon [0,1]\mapsto [0,1]$ with a unique fixed point and topological entropy $h(f)\in (\log \sqrt{2}, \log \sqrt{2} +\eps)$.
\end{proposition}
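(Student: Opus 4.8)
My plan is to exhibit an explicit piecewise-linear witness $f\colon[0,1]\to[0,1]$ and to verify its properties using only the tools assembled in Sections~\ref{sec:aux} and~\ref{sec:lower-bounds}. Since $[0,1]$ is compact we have $\ent(f)=h(f)$, and by Proposition~\ref{prop:mixing-equivalences} a self-map of $[0,1]$ is bitransitive exactly when it is mixing; thus it suffices to build a mixing map with a single fixed point and with $h(f)\in(\log\sqrt2,\log\sqrt2+\eps)$. I would first fix $\lambda$ with $2<\lambda$ and $\tfrac12\log\lambda<\log\sqrt2+\eps$, and then take $f$ to be built from branches of slope $\pm\lambda$ and $\pm1$, arranged --- exactly as in the definitions of $\varphi_\eps$ and $F_\eps$ --- so that every steep branch is carried into a branch of slope $\pm1$. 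This makes both $f$ and $f^2$ Lipschitz with constant $\lambda$, so that $h(f)=\tfrac12 h(f^2)\le\tfrac12\log\lambda<\log\sqrt2+\eps$.

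The geometric idea is to deform the extremal transitive but non-mixing map, whose square leaves invariant two subintervals meeting at a single point $c$ and acts on each of them as a full $2$-horseshoe, so that its entropy equals $\log\sqrt2$. I would design $f$ to \emph{almost} interchange two subintervals $[0,c]$ and $[c,1]$, but with a small, controlled overshoot past their common endpoint $c$, so that $f([0,c])$ and $f([c,1])$ each cross $c$. This overshoot destroys the $f^2$-invariance of the two halves and is precisely the mechanism that promotes transitivity to mixing; choosing it small keeps the entropy near $\log\sqrt2$. The unique transversal crossing of the diagonal survives the deformation, which yields a single fixed point near $c$.

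For the strict lower bound I would avoid any direct entropy computation and instead feed the construction into the machinery above. By design $f^2$ carries a $2$-horseshoe on a compact interval $J$ that can be taken to be a proper subinterval $J\subsetneq[0,1]$. As $f$ is mixing, $f^2$ is transitive, so Lemma~\ref{lem:not-full} upgrades this to a \emph{loose} $2$-quasihorseshoe for $f^2$, and Proposition~\ref{prop:horseshoe-gives-entropy} then gives $h(f^2)>\log2$, that is $h(f)=\tfrac12 h(f^2)>\log\sqrt2$. Combined with the Lipschitz estimate of the first paragraph, this places $h(f)$ strictly inside $(\log\sqrt2,\log\sqrt2+\eps)$.

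The main obstacle is verifying mixing, which I would do through condition~(5) of Proposition~\ref{prop:mixing-equivalences}: for an arbitrary subinterval, its successive $f^2$-images expand (by a factor governed by the steep branches) until one of them covers a fixed reference interval around $c$, after which the overshoot forces the images to spill across $c$ and eventually engulf every compact subinterval of $(0,1)$. The two delicate requirements, which must be met simultaneously, are the slope bookkeeping that keeps $f^2$ Lipschitz with constant $\lambda$ --- so the entropy stays near $\log\sqrt2$ rather than near $\log2$ --- and the quantitative calibration of the overshoot, which has to be large enough to force mixing yet small enough to leave an honest $2$-horseshoe for $f^2$ inside a proper subinterval. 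For the remaining routine estimates, as well as for an alternative realization by a map of constant slope, we refer to \cite[Proposition~4.3.9 and Example~4.4.5]{Ruette}.
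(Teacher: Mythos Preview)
The paper does not actually supply a proof of this proposition: it is stated in Section~\ref{sec:remarks} purely as a correction to \cite{Canovas}, with the reader referred to \cite[Proposition~4.3.9 and Example~4.4.5]{Ruette} for the details. Your proposal therefore goes well beyond what the paper itself provides.

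The construction you outline---perturbing the extremal transitive, non-bitransitive map of entropy $\log\sqrt2$ by a small overshoot past the common endpoint, with the slope bookkeeping that keeps $f^2$ Lipschitz with constant $\lambda$---is exactly the mechanism behind the examples in \cite{Ruette} to which the paper defers, and it is the compact-interval analogue of the paper's own map $\varphi_\eps$ (with $\lambda$ just above $2$ rather than just above $3$). One pleasant feature of your sketch is that for the \emph{strict} lower bound you recycle the paper's Lemma~\ref{lem:not-full} and Proposition~\ref{prop:horseshoe-gives-entropy} instead of computing entropy directly; this is cleaner than the usual argument, which either invokes Blokh's theorem that every transitive interval map has entropy $\ge\log\sqrt2$ together with a separate observation that equality forces non-bitransitivity, or appeals to constant-slope conjugacy. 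The points you flag as delicate (the slope matching for $f^2$ and the calibration of the overshoot so that a $2$-horseshoe for $f^2$ sits inside a proper subinterval) are indeed where the work lies, and your reference to \cite{Ruette} is the same one the paper gives for those estimates.
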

The mistake described above leads to part $(d)$ of Theorem 5 in \cite{Canovas}, which says that the set of fixed points of bitransitive map of the real line must be unbounded. This conclusion is then used in proof of part $(b)$ of Theorem 4 in \cite{Canovas} stating for a bitransitive map $f\colon\R\mapsto\R$ we have $\ent(f)\ge \log2$. Both statements are false, as can be deduced from Example \ref{ex:inf-is-log-sqrt-3} and Theorem \ref{thm:inf-is-log-sqrt-3} (see also Figures \ref{fig:F_eps} and \ref{fig:F2_eps}).
\end{remark}

\begin{remark}
Although part $(a)$ of Theorem 4 on \cite{Canovas} is correct, its proof on \cite[page ]{Canovas} is not. The proof begins with the following claim: \emph{if $f$ is a transitive, but not bitransitive map of the the real line and $a$ denotes its unique fixed point, then there is a point $b>a$ such that $f^2(b)=a$}.
This is not true, as can be deduced from Example \ref{ex:inf-not-bi-is-log-sqrt-3} and Theorem \ref{thm:inf-not-bi-is-log-sqrt-3} (see also Figures \ref{fig:G_eps} and \ref{fig:G2_eps}). The correct proof of part $(a)$ of \cite[Theorem 4]{Canovas}
is presented above in the first case considered in Proposition \ref{prop:unique-fix}.
\end{remark}

\section{On specification property}\label{sec:specification}

The specification property was introduced by Bowen in
\cite{BowenSpec} (see also \cite{Denker}).
We say that $f\colon X\mapsto X$ has the \emph{periodic specification property} 
if, for any $\eps > 0$, there is an integer $N_\eps>0$
such that for any integer $s\geq 2$, any set $\{y_1,\dots,y_s\}$
of $s$ points of $X$, and any sequence $0=j_1\leq k_1 < j_2 \leq k_2
< \dots < j_s \leq k_s$ of $2s$ integers with $j_{l+1} - k_l\geq
N_\eps$ for $l= 1,\dots,s-1$, there is a point $x\in X$ such
that, for each $1\leq m\leq s$ and any $i$ with
$j_m \leq i \leq k_m$, the following conditions hold:
\begin{align}
d(f^i(x),f^i(y_m))&<\eps, \label{cond:psp1}\\
f^n (x)&=x, \quad\textrm{ where } \;n=N_\eps+
k_s. \label{cond:psp2}
\end{align}
If we drop the periodicity condition \eqref{cond:psp2} from the above definition,
that is, if $f$ fulfills only the first condition above, then we say that $f$ has the
\emph{specification property}.

\begin{remark}
Maps with the specification property are not necessarily surjective. To see this,
consider the discrete metric space $X=\{a,b\}$, and $f\colon X\mapsto X$ given by $f(a)=f(b)=a$.
It is not hard to verify that the map $f$ has the specification property.
\end{remark}

Maps with the periodic specification property have dense set of periodic points, hence such maps are onto,
and it easy to see that they are mixing. There are examples of mixing dynamical systems
with dense set of periodic points but without the specification property (see \cite{Weiss}). It was proved by Blokh
\cite{Blokh,Blokh2} (see \cite{Buzzi} for another proof), that mixing maps of a compact interval have the periodic specification property.
We note here that mixing map of the real line not necessarily have the specification property, and there are
conjugate dynamical systems defined on non-compact metric spaces such that one has the specification property, while the other has not, that is, specification property is not a conjugacy invariant outside the compact setting.

\begin{proposition}\label{prop:no-specification}
For every $\eps>0$ the map $F_\eps$ defined in Example \ref{ex:inf-is-log-sqrt-3} is a mixing map of the real line with the usual metric which has not the specification property, but there exists a map $f_\eps\colon(0,1)\mapsto(0,1)$ with the specification property, which is conjugate to $F_\eps$.
\end{proposition}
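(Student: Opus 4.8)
\emph{Plan.} The plan is to prove the two assertions separately, since together they exhibit the failure of conjugacy--invariance: $F_\eps$ on $\R$ lacks specification, while its conjugate copy $f_\eps$ on $(0,1)$ (both with the standard metric) has it.

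For the first assertion the key is an a priori bound on how fast an orbit of $F_\eps$ can approach or recede from the origin. Writing $\{x\}=x-\lfloor x\rfloor$, I would first read off from \eqref{eq:extend} that
\[
\bigl||F_\eps(x)|-|x|\bigr|\le C:=M+1,\qquad M:=\max_{[0,1]}|\varphi_\eps|,
\]
for every $x\in\R$: when $x>0$ this is clear since $|F_\eps(x)|=|x|$, and when $x\le 0$ it follows from $\bigl||\varphi_\eps(\{x\})-\lfloor x\rfloor-1|-|x|\bigr|\le M+1$ together with $\bigl||\lfloor x\rfloor+1|-|x|\bigr|\le 1$. Iterating gives $|F_\eps^{k}(x)|\le|x|+kC$ and $|F_\eps^{k}(x)|\ge|x|-kC$ for all $k\ge 0$. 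Now assume, for contradiction, that $F_\eps$ has the specification property and fix $\eps\in(0,1)$ with the associated constant $N_\eps$. I would apply the definition with $s=2$, $y_1=0$, times $j_1=k_1=0$ and $j_2=k_2=N_\eps$, and a point $y_2$ with $|y_2|>2\eps+2N_\eps C$. A shadowing point $x$ would satisfy $|x|<\eps$ and $|F_\eps^{N_\eps}(x)-F_\eps^{N_\eps}(y_2)|<\eps$; but $|F_\eps^{N_\eps}(x)|\le\eps+N_\eps C$ while $|F_\eps^{N_\eps}(y_2)|\ge|y_2|-N_\eps C>2\eps+N_\eps C$, so the two are at distance exceeding $\eps$, a contradiction. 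Combined with mixing from Proposition \ref{prop:best-bound-bitrans}, this gives a mixing real line map without specification.

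For the second assertion I would realize $F_\eps$ as the restriction of a mixing interval map to the interior of $[0,1]$. Fix any increasing homeomorphism $h\colon\R\to(0,1)$ and set $f_\eps=h\circ F_\eps\circ h^{-1}$, which is conjugate to $F_\eps$ by construction. Since $F_\eps(x)=-x\to-\infty$ as $x\to+\infty$ and $F_\eps(x)\to+\infty$ as $x\to-\infty$ by \eqref{eq:extend}, the map $f_\eps$ extends continuously to $\bar f\colon[0,1]\to[0,1]$ with $\bar f(0)=1$, $\bar f(1)=0$. As $F_\eps$ takes finite values at finite arguments, $\bar f^{-1}(\{0,1\})=\{0,1\}$, so $(0,1)$ is invariant and $\bar f|_{(0,1)}=f_\eps$. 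Topological mixing being a conjugacy invariant, $f_\eps$ is mixing on $(0,1)$, and density of $(0,1)$ in $[0,1]$ then forces $\bar f$ to be mixing on $[0,1]$. By the theorem of Blokh \cite{Blokh,Blokh2} quoted above, $\bar f$ has the periodic specification property. To transfer this to $f_\eps$, given $\eps>0$ and $y_1,\dots,y_s\in(0,1)$ with prescribed times, the finitely many orbit points involved lie in some $[\delta,1-\delta]$; applying periodic specification of $\bar f$ with precision $\min\{\eps,\delta/2\}$ yields $x\in[0,1]$ whose orbit shadows the segments and satisfies $\bar f^{i}(x)\in[\delta/2,1-\delta/2]$ on them. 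In particular $x=\bar f^{0}(x)\in(0,1)$, and by invariance the whole orbit of $x$ stays in $(0,1)$, so $x$ witnesses specification for $f_\eps$.

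The hard part will be the final transfer step: specification is metric, not topological, so it is not inherited by the open invariant subset $(0,1)$ for formal reasons. The argument circumvents this precisely because the endpoints are the only $\bar f$-preimages of $\{0,1\}$, which simultaneously yields invariance of $(0,1)$ and guarantees that any orbit shadowing interior segments must itself lie in the interior. The routine but necessary verifications are the continuity of $\bar f$ at the endpoints and the deduction of mixing of $\bar f$ from mixing of $f_\eps$, both of which I would carry out explicitly before invoking Blokh's theorem.
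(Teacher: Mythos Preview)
Your argument for the first assertion is correct and essentially identical to the paper's: both exploit the fact that $|F_\eps(x)|$ changes by a bounded amount per iterate, so no fixed gap length $N$ lets a specification orbit travel arbitrarily far. Your derivation of the bound $\bigl||F_\eps(x)|-|x|\bigr|\le M+1$ is written a bit tersely, but the estimate is right.

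For the second assertion your strategy coincides with the paper's (extend $f_\eps$ to a mixing map $\bar f$ of $[0,1]$, invoke Blokh, then push specification back to $(0,1)$), but the transfer step has a genuine quantifier problem. You apply specification of $\bar f$ with precision $\min\{\eps,\delta/2\}$, where $\delta$ depends on the orbit segments $\bar f^{i}(y_m)$, hence on the whole configuration $(y_m,j_m,k_m)$. The corresponding gap bound $N_{\min\{\eps,\delta/2\}}^{\bar f}$ therefore also depends on the configuration, and since $\delta$ can be made arbitrarily small by choosing $y_1$ near $0$ or letting $k_s$ grow, no single $N_\eps$ chosen in advance will dominate it. As written, you have shown only ``for every configuration there is some $N$ that works,'' which is weaker than specification.

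The fix is simple and close to what the paper does: take $N_\eps$ for $f_\eps$ to be $N_{\eps/2}^{\bar f}$, apply $\bar f$'s periodic specification with precision $\eps/2$, and if the resulting periodic point happens to be $0$ or $1$ (the only points of $[0,1]\setminus(0,1)$, forming a single period-$2$ orbit), perturb it to a nearby point $x'\in(0,1)$ using continuity of $\bar f,\bar f^2,\dots,\bar f^{k_s}$ at the endpoint. Then $x'$ still $\eps$-shadows the prescribed segments, and by the invariance of $(0,1)$ that you already established, its whole forward orbit stays in $(0,1)$. Here the configuration-dependent step (the perturbation) occurs \emph{after} $N_\eps$ is fixed, which is allowed.
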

\begin{proof}We have already proved that $F_\eps$ is mixing. Note that if $x\in [-n,n]$ for some integer $n>0$, then $F_\eps(x)\in[-n-1,n+1]$. It follows that any point $x\in\mathbb{R}$ needs at least $n-1$ iterates of $F_\eps$ to travel from $1/2$ neighborhood of the orbit of fixed point $0$ to the $1/2$ neighborhood of the orbit of periodic point $n+1$, so $F_\eps$ can not have the specification property. It is easy to see that there is a mixing map of the compact interval $\bar{f}_\eps$ such that $f_\eps=\bar{f}_\eps|_{(0,1)}$ is conjugate to $F_{\eps}$. As every mixing map of the compact interval has the specification property, so does $\bar{f}_\eps$. It follows that $f_\eps$ also has this property for if $0$ or $1$ is needed to play the role of $z$ in definition of the specification property, then it can be replaced by sufficiently close periodic point of period $2$ lying in $(0,1)$. This finishes the proof.
\end{proof}

The following result may be proved much in the same way as Proposition \ref{prop:no-specification}.

\begin{proposition}
For every $\eps>0$ the map $H_\eps=G^2_\eps|_{[0,\infty)}$, where $G_\eps$ is the map defined in Example \ref{ex:inf-not-bi-is-log-sqrt-3} is a mixing map of the half-open interval $[0,\infty)$ with the usual metric which has not the specification property, but there exists a map $g_\eps\colon[0,1)\mapsto[0,1)$ with the specification property, which is conjugate to $H_\eps$.
\end{proposition}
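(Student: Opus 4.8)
The plan is to follow the three-step pattern of Proposition~\ref{prop:no-specification}: first check that $H_\eps$ is mixing, then isolate a quantitative ``speed limit'' that obstructs specification, and finally compactify the half-line to produce the conjugate map $g_\eps$ on $[0,1)$ and import specification for it from the theorem of Blokh. To begin, I would record mixing of $H_\eps$ with no extra work: by the proof of Proposition~\ref{prop:best-bound-not-bi} the set $[0,\infty)$ is invariant for $G^2_\eps$ and $G^2_\eps|_{[0,\infty)}$ is transitive, so, since $[0,\infty)$ is a half-open interval, Corollary~\ref{cor:half-open-mix} upgrades transitivity to mixing. Thus $H_\eps$ is a mixing map of the half-open interval.

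Next I would prove that $H_\eps$ has no specification. The additive bound $F_\eps([-n,n])\subseteq[-n-1,n+1]$ used for $F_\eps$ is replaced here by a \emph{multiplicative} speed limit dictated by the dyadic self-similarity of $G_\eps$: since $H_\eps=G^2_\eps|_{[0,\infty)}$ is Lipschitz with constant $\lambda$ (as noted in the proof of Proposition~\ref{prop:best-bound-not-bi}) and fixes $0$, we have $H_\eps(x)=|H_\eps(x)-H_\eps(0)|\le \lambda x$ for every $x\ge 0$, whence $H^m_\eps([0,t])\subseteq[0,\lambda^m t]$ for all $t>0$ and $m\ge 0$. Suppose, toward a contradiction, that $H_\eps$ had the specification property, and let $N$ be the constant associated with $\eps=1/2$. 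Because $H_\eps$ carries a $3$-horseshoe at every dyadic scale (the scale-$1$ horseshoe built from $\psi_\eps$ reproduced near $2^{M}$ via the maps $h_k$), and a horseshoe branch maps an interval over itself and hence contains a fixed point by the intermediate value theorem, $H_\eps$ has fixed points of arbitrarily large magnitude; fix one with $w>\tfrac12\lambda^{N}+1$. Applying specification to the points $y_1=0$, $y_2=w$ with the windows $\{0\}$ and $\{N\}$ (whose gap equals $N$) would produce a point $x\in[0,1/2)$ with $|H^N_\eps(x)-w|<1/2$; but $H^N_\eps(x)\le \lambda^{N}\cdot\tfrac12$, so $|H^N_\eps(x)-w|>1$, a contradiction. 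Hence $H_\eps$ does not have the specification property.

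Finally I would construct $g_\eps$ and transfer specification. Fix a homeomorphism $\theta\colon[0,\infty)\to[0,1)$ and set $g_\eps=\theta\circ H_\eps\circ\theta^{-1}$, a map of $[0,1)$ conjugate to $H_\eps$. Since the image under $H_\eps$ of a point at a large dyadic scale lies at a comparable scale, $H_\eps(y)\to\infty$ as $y\to\infty$, so $\lim_{x\to 1^-}g_\eps(x)=1$ exists and $g_\eps$ extends continuously to a map $\bar g_\eps$ of the compact interval $[0,1]$ fixing the endpoint $1$. As $g_\eps$ is mixing on the dense subset $[0,1)$, the extension $\bar g_\eps$ is mixing on $[0,1]$, so by the theorem of Blokh \cite{Blokh,Blokh2} it has the periodic specification property, in particular the specification property. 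I would then deduce specification for $g_\eps=\bar g_\eps|_{[0,1)}$ exactly as in Proposition~\ref{prop:no-specification}: given tracking data lying in $[0,1)$, Blokh furnishes a tracking point in $[0,1]$, and since $g_\eps$ maps $[0,1)$ into itself the only obstruction is the excluded endpoint $1$, which, if it is needed as the tracking point, may be replaced by a sufficiently close period-$2$ point of $\bar g_\eps$ lying in $[0,1)$; over the relevant finite time window this replacement tracks the same data, and its whole orbit stays in $[0,1)$.

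The routine parts are the mixing statement and the continuous extension. I expect the main obstacle to be the specification-failure step, and inside it two points deserve care: identifying the correct quantitative obstruction (the Lipschitz-plus-fixed-point estimate $H^m_\eps([0,t])\subseteq[0,\lambda^m t]$, the multiplicative analogue of the additive bound for $F_\eps$), and guaranteeing \emph{far} fixed points of $H_\eps$ to anchor the distant end of the specification data. For the anchor one genuinely needs a point whose orbit remains far throughout the window, which is why I use a fixed point rather than an arbitrary distant point; its existence I would justify through the self-similar horseshoe structure of $G_\eps$ (alternatively, from density of periodic points of the mixing extension $\bar g_\eps$ near the endpoint~$1$).
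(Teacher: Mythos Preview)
Your proposal follows essentially the same three-step approach the paper indicates (the paper merely says the proof is ``much the same'' as Proposition~\ref{prop:no-specification}), and your multiplicative speed limit via the Lipschitz constant of $G^2_\eps$ is the natural analogue of the additive block bound used there. Two small points are worth tightening: first, the unbounded sequence of fixed points of $H_\eps$ is already established in the proof of Proposition~\ref{prop:half-open-lower-bound} for \emph{any} transitive map of $[0,\infty)$, so you need not invoke horseshoes; second, in the last step you should write ``periodic point'' rather than ``period-$2$ point'', since here $\bar g_\eps(1)=1$ is a \emph{fixed} point (unlike the period-$2$ endpoint orbit $\{0,1\}$ of $\bar f_\eps$ in Proposition~\ref{prop:no-specification}), so the existence of nearby period-$2$ points is not immediate, whereas density of periodic points of $\bar g_\eps$ in $[0,1)$ follows from periodic specification and already suffices for the replacement argument.
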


We close this paper by offering a question for further research: \emph{Is the bound obtained in Proposition \ref{prop:two-fixed} best possible?}

\section*{Acknowledgement(s)}
The research leading to this paper were supported by the Polish Ministry of Science and Higher Education grant Iuventus Plus IP~$2011028771$.

\end{document}